\newcommand\delc[1]{}
\newcommand\comcd[1]{}
\newcommand\del[1]{}
\newcommand\deln[1]{}
\newcommand\delr[1]{}
\newcommand\comad[1]{}
\newcommand\Greendel[1]{}
\newcommand\old[1]{}
\numberwithin{equation}{section}
\newtheorem{assumption}{Assumption}[section]
\def\old#1{}
\def\text#1{{\rm #1}}
\def\newold#1{}
\theoremstyle{plain}
\numberwithin{equation}{section}
\begin{document}

\title{Some dynamical properties of constrained Modified Swift-Hohenberg Equation}

\author{Saeed Ahmed}
\address{Department of Mathematics\\
Sukkur IBA University\\
Sindh Pakistan}
\email{saeed.msmaths21@iba-suk.edu.pk}

\author{Javed  Hussain}
\address{Department of Mathematics\\
Sukkur IBA University\\
Sindh Pakistan}
\email{javed.brohi@iba-suk.edu.pk}

\begin{abstract}
In this paper, we have studied the long-term behavior  for the projected deterministic constrained modified Swift-Hohenberg equation with constraints and Dirichlet boundary conditions. Specifically, using Lojasiewicz-Simon inequality, we have shown that  the global solution approaches an equilibrium state. Also, we have analyzed the rate at which the solution approaches equilibrium. Finally, we have proven the existence of a global attractor.
\end{abstract}
\keywords{Constrained Modified Swift-Hohenberg equation; Lojasiewicz-Simon inequality; Convergence to equilibrium; Decay rate; Global attractor\\
2020 \textit{Mathematics Subject Classification:} 335R01; 35K61; 47J35; 58J35.}
\date{\today}
\maketitle



\newtheorem{theorem}{Theorem}[section]
\newtheorem{lemma}[theorem]{Lemma}
\newtheorem{proposition}[theorem]{Proposition}
\newtheorem{corollary}[theorem]{Corollary}
\newtheorem{question}[theorem]{Question}

\theoremstyle{definition}
\newtheorem{definition}[theorem]{Definition}
\newtheorem{algorithm}[theorem]{Algorithm}
\newtheorem{conclusion}[theorem]{Conclusion}
\newtheorem{problem}[theorem]{Problem}

\theoremstyle{remark}
\newtheorem{remark}[theorem]{Remark}
\numberwithin{equation}{section}

\section{{\large Introduction}}

Doelman et al.\cite{Doelman} studied the following modified Swift-Hohenberg equation (MSHE) for the first time in 2003

\begin{align}{\label{dole}}
    u_{t}=-\alpha (1+ \Delta)^{2}u +\beta u-\gamma |\nabla u|^{2}-u^{3}
\end{align}
for a pattern formation with two unbounded spatial directions. Here, $\alpha >0, \beta, and ~\gamma $ are constants. When $\gamma=0$, equation (\ref{dole}) becomes the usual Swift-Hohenberg equation. The extra term $\gamma |\nabla u|^{2}$  reminiscent of the Kuramoto-Sivashinsky equation, which arises in
the study of various pattern formation phenomena involving some kind of phase
turbulence or phase transition \cite{Kuramoto, Siivashinsky} breaks the symmetry $u \rightarrow -u$.\\

From the formation of patterns to turbulence, the study of nonlinear differential equations has played a pivotal role in understanding physical phenomena. Of these equations, the Swift-Hohenberg equation has attracted scholars because it describes systems involving pattern-forming instabilities, such as Rayleigh–Benard convection and optical systems. Nonlinearity and constraints have been introduced to understand physical systems with more complex behaviors \cite{yochelis2006excitable, burke2006localized, kozyreff2006asymptotics}. More precisely, higher-order nonlinearities and domain constraints were introduced to comprehend localized patterns and complex spatiotemporal dynamics \cite{wetzel2010spatially, gomez2008swift, kirr2014pattern}. \\

Motivated by a broader range of physical systems, we are concerned with the following equation  obtained by modifying equation (\ref{dole}).
In the equation (\ref{SHEq}), $u(x,t)$ evolves under \textbf{negative bihormonic operator} $(-\Delta^{2}u)$, \textbf{ diffusion} $(2\Delta u)$, \textbf{a linear reaction term} $(au)$ and \textbf{a higher order non-linearity} $(u^{2n-1})$.

\begin{equation}{\label{SHEq}}
    u_{t}=-\Delta^{2}u+2\Delta u -au - u^{2n-1}
\end{equation}

In recent years, studying PDEs via attractor based approach  has attracted significant number of scholars. For instance, a global attractor  for MSHE was proven in \cite{Song,Xu}, the pull-back attractor in \cite{Park}, and the existence of a uniform attractor in \cite{Xu} for a non-autonomous MSHE were shown.
In addition, Mustafa Polat proved the Global attractor for a modified Swift–Hohenberg equation near the onset of instability with the Diriclet’s boundary conditions \cite{Polat},  Lingyu Song and others have shown global attractor of a modified Swift–Hohenberg equation in $H^{k}$ spaces using an iteration procedure, regularity estimates for the linear semigroups and a classical existence theorem of global attractor \cite {Song}. However, to the best of our knowledge, there is no work on the projected deterministic constrained modified Swift-Hohenberg equation in regard with global attractor and some dynamical properties such as the convergence of the global solution to an equilibrium. We aim to fill this gap.

This work is concerned with  the following projected deterministic constrained modified Swift-Hohenberg equation.

\begin{align}{\label{main_Prb_1}}
        \frac{\partial u}{\partial t} &=\pi _{u}(-\Delta^{2}u
        +2\Delta u -au -u^{2n-1}) \\
u(0) &=u_{0}(x),~~~~~~~~~~\text{for}~~x \in  \mathcal{O}  \notag 
    \end{align}

 where $\mathcal{O} \subset \mathcal{R}^{d}$ denotes a bounded domain with a smooth boundary $\partial \mathcal{O}$,  $n \in \mathbb{N} $  (or, in a general sense, an actual number such that $n>\frac{1}{2}$), and $u_{0} \in H_{0}^{1}(\mathcal{O}) \cap H^{2}(\mathcal{O})\cap \mathcal{M}$. \\

In this paper, we aim to study the long term behavior of the above problem. More precisely, we will demonstrate the convergence of the solution to equiilibrim and we will derive some decay estimates along with the existence of global attractor. \\
A plethora of work has been established on the convergence of parabolic flows to equilibrium with the different kinds of nonlinearities. To study these works, we suggest readers to \cite{Vishnevski,Matano,Simon,Lions,Hale, Haraux, Haraux2011,Haraux2009}. And for the detailed study of decay estimates,  we refer to \cite{Songu} and \cite{Haraux2003}. For the existence of global attractor for parabolic problems we refer to  \cite{Songu}, \cite{Temam}, \cite{Kapustyan} and \cite{Carvalho}.\\

This paper is organized as follows. Section 1 is about the introduction of the paper. Section 2 presents the necessary notations, function spaces,definitions, and preliminary results. Section 3 presents the proof that the orbits of the global solution to the proposed problem are precompact. In addition, the version the Lojasiewicz-Simon inequality is derived,  the convergence of the solution  to either an exponential or polynomial decay rate, depending on the values of the parameter $\theta$, is analyzed, and by using this inequality, the solution converges to equilibrium within the omega-limit set is shown. Section 4 studies the rate of convergence of the solution to equilibrium. Finally, in Section 5, we demonstrate the existence of a global attractor.

\section{{\Large Functional settings, Assumption and Preliminaries}}

\addtocontents{toc}{\protect\setcounter{tocdepth}{1}} 

We introduce some basic notions that we are going to use in this paper.

\begin{assumption}{\label{Ass_2.2.2}}

    Assume that the spaces , $(\mathcal{E}, \|.\|) $, $(\mathcal{V}, \|.\|_{\mathcal{V}}) $  and $(\mathcal{H}, |.|) $ are Banach spaces, where  

\begin{eqnarray*} 
\mathcal{H}&:=& L^2(\mathcal{O})\\
\mathcal{V} &:=& H_{0}^{1}(\mathcal{O}) \cap H^{2}(\mathcal{O})\\
\mathcal{E} &:=& D(A) =H_{0}^{2}(\mathcal{O}) \cap H^{4}(\mathcal{O}).
\end{eqnarray*} 

and  the following embeddings are dense and continuous: $$ \mathcal{E} \hookrightarrow \mathcal{V} \hookrightarrow  \mathcal{H}. $$

And the following  self-adjoint operator
 $A :  \mathcal{E} \rightarrow {L}^{2}(\mathcal{O})$ is defined as
 $$ A = \Delta^{2}-2\Delta ~~~~~ \text{and} ~~~\mathcal{O} \subset \mathbb{R}^{d} $$

Notice that, we use norm on $\mathcal{V}$ as:

\begin{align*}
    \|u\|^{2}_{\mathcal{V}} &= \|u\|^{2}_{H^{1}_{0}} +  \|u\|^{2}_{H^{2}} =\|u\|^{2}_{L^{2} (\mathcal{O})} +  2\|\nabla u\|^{2}_{L^{2} (\mathcal{O})} +   \|\Delta u\|^{2}_{L^{2} (\mathcal{O})} 
\end{align*}
 
\end{assumption}

\subsection{Hilbert manifold, and Orthogonal projection}

  \begin{definition}

    \cite{masiello1994variational} Assume that ${H}$ is a Hilbert space with inner product $\langle ~ \cdot, ~\cdot \rangle$, then the Hilbert manifold $\mathcal{M}$ is given as:
\begin{align*}
    \mathcal{M} =  \{~ u \in \mathcal{H}, &|u|_{\mathcal{H}}^{2}=1~\}     
\end{align*}   
\end{definition}

\begin{definition}

\cite{masiello1994variational}, \cite{hussain2015analysis} At a point $u~ \in ~{H}$, Tangent space $T_{u}\mathcal{M}$ is given as:
\begin{align*}
    T_{u}\mathcal{M}= \{~ h: ~~\langle h, u \rangle = 0~ \},~~~~~~~h \in {H}
\end{align*}  
\end{definition}

The orthogonal projection of $h$ onto $u$, $ \pi_{u}~: ~ \mathcal{H} \longrightarrow T_{u}\mathcal{M}$ is defined as
\begin{equation}{\label{lemma_Tangent}}
    \pi_{u}(h)= h-\langle h, u \rangle ~u, ~~~~~~h \in \mathcal{H}
\end{equation}  

\begin{remark}
  For any $T\geq 0$ we denote 
\begin{equation*}
   X_{T}:=L^{2}\left( 0,T;\mathcal{E}\right) \cap C\left( \left[ 0,T\right] ;\mathcal{V}\right) ,
\end{equation*} 
then it can be easily proven that  $\left( X_{T},\left\vert \cdot\right\vert _{X_{T}}\right) $  is the Banach space with the following norm, 
\begin{equation*}
\left\vert u\right\vert _{X_{T}}^{2}=\underset{p\in \lbrack 0,T]}{\sup }\left\Vert u(p)\right\Vert ^{2}
+\int_{0}^{T}\left\vert u(p)\right\vert_{\mathcal{E}}^{2}dp,\qquad u \in X_{T}
\end{equation*} 
\end{remark}

\begin{corollary}
    
If $ u \in \mathcal{E} \cap \mathcal{M}$ then   the projection of $-\Delta^{2}u+2\Delta u -au - u^{2n-1}$ under the map $\pi_{u}$ is:\\

$\pi _{u}(-\Delta^{2}u+2\Delta u -au - u^{2n-1})= -\Delta^{2}u+2 \Delta u  + \|  u\|^{2}_{{H}^{2}_{0}} ~u + 2\|  u\|^{2}_{{H}^{1}_{0}} ~u \notag  ~ +\| u\|^{2n}_{{L}^{2n}} u- u^{2n-1} $

\begin{proof}
    
Using (\ref{lemma_Tangent}) and integration by parts formula \cite{Haim}, we have:

 \begin{eqnarray*}
 &&\pi _{u}(-\Delta^{2}u+2\Delta u -au - u^{2n-1})\notag\\ &=&-\Delta^{2}u+2\Delta u -au - u^{2n-1}
+\langle \Delta^{2}u-2\Delta u +au + u^{2n-1}, u \rangle ~u \notag \\
&=&-\Delta^{2}u+2 \Delta u -au - u^{2n-1} + \langle \Delta^{2}u, u \rangle ~u -2\langle \Delta u, u \rangle ~u \notag +a\langle u, u  
\rangle ~u+\langle u^{2n-1}, u \rangle ~u \notag \\
&=&-\Delta^{2}u+2 \Delta u -au - u^{2n-1} + \langle \Delta u,  \Delta u \rangle ~u -2\langle - \nabla u, \nabla u \rangle ~u \notag ~+a\langle u, u  
\rangle ~u+\langle u^{2n-1}, u \rangle ~u \notag \\
&=&-\Delta^{2}u+2 \Delta u -au - u^{2n-1} + \| \Delta u\|^{2}_{{L}^{2}(\mathcal{O})} ~u + 2\| \nabla u\|^{2}_{{L}^{2}(\mathcal{O})} ~u \notag  +au+\| u\|^{2n}_{{L}^{2n}(\mathcal{O})} u \notag \\ 
&=&-\Delta^{2}u+2 \Delta u  + \|  u\|^{2}_{{H}^{2}_{0}} ~u + 2\|  u\|^{2}_{{H}^{1}_{0}} ~u \notag  ~ +\| u\|^{2n}_{{L}^{2n}} u- u^{2n-1}.
\end{eqnarray*}    
Therefore, the projection of $-\Delta^{2}u+2\Delta u +au + u^{2n-1}$ under map  $\pi_{u}$ is
\begin{eqnarray} {\label{Projection_U}}
      \pi _{u}(-\Delta^{2}u+2\Delta u-au - u^{2n-1}) \notag =  -  \Delta^{2}u+2 \Delta u  + \|  u\|^{2}_{{H}^{2}_{0}} ~u + 2\|    u\|^{2}_{{H}^{1}_{0}} ~u  +\| u\|^{2n}_{{L}^{2n}} u- u^{2n-1} 
\end{eqnarray} 

\end{proof}
\end{corollary}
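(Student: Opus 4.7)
The statement is an explicit calculation rather than a deep result, so my plan is essentially to unfold the definition of $\pi_u$ from equation \eqref{lemma_Tangent} and evaluate each resulting $L^2$ inner product by integration by parts, exploiting the boundary conditions encoded in $\mathcal{E}=H_0^2(\mathcal{O})\cap H^4(\mathcal{O})$ and the manifold constraint $|u|_{\mathcal{H}}=1$.

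First I would set $h:=-\Delta^2 u+2\Delta u-au-u^{2n-1}$ and write
$$\pi_u(h)=h-\langle h,u\rangle\,u = -\Delta^2 u+2\Delta u-au-u^{2n-1}+\bigl\langle \Delta^2 u-2\Delta u+au+u^{2n-1},\,u\bigr\rangle\,u.$$
Then I would evaluate the four pieces of the inner product separately. For $\langle\Delta^2 u,u\rangle$, applying Green's identity twice and using $u,\Delta u\in H_0^1$ (consequences of $u\in\mathcal{E}$) yields $\langle\Delta u,\Delta u\rangle = \|\Delta u\|_{L^2}^2$, which is $\|u\|_{H_0^2}^2$ under the convention fixed in Assumption \ref{Ass_2.2.2}. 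For $-2\langle\Delta u,u\rangle$, one integration by parts with $u\in H_0^1$ gives $2\|\nabla u\|_{L^2}^2 = 2\|u\|_{H_0^1}^2$. The term $a\langle u,u\rangle$ simplifies to $a\cdot 1 = a$ because $u\in\mathcal{M}$ forces $|u|_\mathcal{H}^2=1$, and finally $\langle u^{2n-1},u\rangle=\int_{\mathcal{O}} u^{2n}\,dx = \|u\|_{L^{2n}}^{2n}$.

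Substituting these four values back and collecting the coefficients of $u$ gives
$$\pi_u(h)=-\Delta^2 u+2\Delta u-au-u^{2n-1}+\bigl(\|u\|_{H_0^2}^2+2\|u\|_{H_0^1}^2+a+\|u\|_{L^{2n}}^{2n}\bigr)u,$$
and the $-au$ term cancels with the $+au$ coming out of the inner product, which is precisely the mechanism that makes the constraint $\mathcal{M}$ useful: it removes the linear reaction coefficient from the projected drift. What remains is exactly the formula in the statement.

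There is no real obstacle: the only point that deserves care is verifying that the regularity $u\in\mathcal{E}\cap\mathcal{M}$ is sufficient to justify the two integrations by parts (no boundary contributions appear because $u\in H_0^2$ and $\Delta u$ vanishes at $\partial\mathcal{O}$ in the trace sense inherited from $H^4\cap H_0^2$), and to ensure that $u^{2n-1}\in L^2(\mathcal{O})$ so that $\langle u^{2n-1},u\rangle$ is well defined; in dimension $d\le 3$ this follows from the Sobolev embedding $H^2\hookrightarrow L^\infty$, and more generally it is covered by the standing assumption that $n$ is such that $u^{2n}\in L^1(\mathcal{O})$, which is implicit in the problem setup. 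Everything else is purely algebraic.
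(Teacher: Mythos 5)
Your proposal is correct and follows essentially the same route as the paper: unfold $\pi_u(h)=h-\langle h,u\rangle u$, evaluate $\langle\Delta^2u,u\rangle=\|\Delta u\|_{L^2}^2$ and $-2\langle\Delta u,u\rangle=2\|\nabla u\|_{L^2}^2$ by integration by parts, use $|u|_{\mathcal H}=1$ to turn $a\langle u,u\rangle u$ into $au$ (cancelling the $-au$), and identify $\langle u^{2n-1},u\rangle=\|u\|_{L^{2n}}^{2n}$. Your added remarks on why the boundary terms vanish and why $u^{2n-1}\in L^2$ are justifications the paper leaves implicit, but they do not change the argument.
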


The following theorem presents the global well-posedness of the problem (\ref{PB}), invariance of solution in the Hilbert manifold, energy equality, and some estimates. This theorem is a key result and will be used in this paper.

\begin{theorem}{\label{global_sol_thm}} \cite{JS}

Let  $\mathcal{E} \subset\mathcal{V} \subset \mathcal{H}$ satisfy the assumption (\ref{Ass_2.2.2}) and for any given $K>0$ there is $T'(K)$ such that $\|u_{0}\| \leq K$, where $u_{0} \in \mathcal{V}$, there is the unique local solution $ u : [0, T') \longrightarrow \mathcal{V}$ to the problem

 \begin{equation}{\label{PB}}
  \left\{
  \begin{aligned}
 \frac{\partial u}{\partial t} &=\pi _{u}(-\Delta^{2}u+2\Delta u -au - u^{2n-1}) \\
        &=-\Delta^{2} u+2\Delta u+\|  u\|^{2}_{{H}^{2}_{0}} ~u + 2\|    u\|^{2}_{{H}^{1}_{0}} ~u  +\| u\|^{2n}_{{L}^{2n}} u- u^{2n-1} =-Au + F(u(t)) \\ 
u(0) &=u_{0}. 
\end{aligned}
\right.
\end{equation}

In addition, the solution $u(t)$ stays on the manifold $\mathcal{M}$, that is, $u(t) \in \mathcal{M}$ for all $t\geq 0$. More importantly, we have an inequality.

\begin{align}\label{der}
   \mathcal{Y}(u(t))- \mathcal{Y}(u_{0}) &= -\int^{t}_{0} {\left\|u_{p}(p) \right\|_{L^{2}}^{2}} ~dp
\end{align} 

Which gives,

\begin{align*}
   \|u(t)\|_{\mathcal{V}}\leq  2~\mathcal{Y}(u_{0}), ~~~~~~\forall t \in [0,T) 
   \end{align*}

Where   $ \mathcal{Y}: \mathcal{V}\longrightarrow R $  is defined as: 
\begin{align*}
    \mathcal{Y}(u) = \frac{1}{2} \|u\|^{2}_{\mathcal{V}} - \frac{1}{2n} \|u\|^{2n}_{L^{2n}}, ~~~~ n \in N 
\end{align*} 
\end{theorem}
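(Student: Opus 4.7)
The plan is to combine standard parabolic machinery for the linear part, a cancellation-by-projection argument for the invariance on $\mathcal{M}$, and the variational structure of $\mathcal{Y}$ to obtain the energy identity and the a priori bound. Since the cited operator $A=\Delta^{2}-2\Delta$ is self-adjoint and positive on $\mathcal{H}$ with domain $\mathcal{E}$, it generates an analytic semigroup $e^{-tA}$ on $\mathcal{H}$, and the fractional space $D(A^{1/2})$ coincides (up to equivalent norms) with $\mathcal{V}$. I would first reinterpret (\ref{PB}) as the mild equation $u(t)=e^{-tA}u_{0}+\int_{0}^{t}e^{-(t-s)A}F(u(s))\,ds$ with $F(u)=\|u\|^{2}_{H^{2}_{0}}u+2\|u\|^{2}_{H^{1}_{0}}u+\|u\|^{2n}_{L^{2n}}u-u^{2n-1}$, and then run a Banach fixed-point argument in a closed ball of $C([0,T'];\mathcal{V})\cap L^{2}(0,T';\mathcal{E})$. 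Local Lipschitz continuity of $F$ on bounded sets of $\mathcal{V}$ follows from the Sobolev embedding $\mathcal{V}\hookrightarrow L^{\infty}(\mathcal{O})$ in low dimension (or $L^{q}$ for appropriate $q$ otherwise), together with the elementary estimate $|u^{2n-1}-v^{2n-1}|\lesssim(|u|^{2n-2}+|v|^{2n-2})|u-v|$; the cubic-in-$u$ norm factors in front of $u$ are clearly locally Lipschitz in $\mathcal{V}$. This yields the unique local solution on $[0,T'(K))$ with $\|u_{0}\|_{\mathcal{V}}\le K$.

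Next, for invariance on $\mathcal{M}$, I would observe that by construction of $\pi_{u}$ the right-hand side of the projected equation lies in $T_{u}\mathcal{M}$, so $\langle u_{t}(t),u(t)\rangle_{\mathcal{H}}=0$, hence $\tfrac{d}{dt}|u(t)|_{\mathcal{H}}^{2}=2\langle u_{t},u\rangle=0$. Since $u_{0}\in\mathcal{M}$, this gives $|u(t)|_{\mathcal{H}}=1$ for all $t\in[0,T')$. Rigorously, one tests the PDE (in its $F$-form) with $u$ in $\mathcal{H}$ and checks that the Lagrange-multiplier terms in $F(u)$ are precisely what is needed to kill $\langle -\Delta^{2}u+2\Delta u-au-u^{2n-1},u\rangle$ after integration by parts — this is exactly the content of the Corollary preceding the theorem.

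For the energy identity, the key point is that problem (\ref{PB}) is the constrained gradient flow of $\mathcal{Y}$ on $\mathcal{M}$. Computing formally, $\mathcal{Y}'(u)=Au-u^{2n-1}\in\mathcal{H}$, and $\pi_{u}\mathcal{Y}'(u)=Au-u^{2n-1}-\langle Au-u^{2n-1},u\rangle u$, which equals $-F(u)+Au$ up to the projector identities derived earlier; hence (\ref{PB}) reads $u_{t}=-\pi_{u}\mathcal{Y}'(u)$. Testing with $u_{t}$ and using $\langle u_{t},u\rangle=0$ gives
\begin{equation*}
\frac{d}{dt}\mathcal{Y}(u(t))=\langle \mathcal{Y}'(u),u_{t}\rangle=\langle \pi_{u}\mathcal{Y}'(u),u_{t}\rangle=-\|u_{t}\|_{L^{2}}^{2},
\end{equation*}
and integrating yields (\ref{der}). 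To justify this rigorously I would first perform the computation on a Galerkin (or regularised) approximation where $u_{t}\in\mathcal{H}$ is genuine, then pass to the limit using the $X_{T}$-bounds. The a priori bound $\|u(t)\|_{\mathcal{V}}\le 2\mathcal{Y}(u_{0})$ follows from the energy identity combined with $\mathcal{Y}(u(t))\ge\tfrac{1}{2}\|u(t)\|_{\mathcal{V}}^{2}-\tfrac{1}{2n}\|u\|_{L^{2n}}^{2n}$, the constraint $|u|_{\mathcal{H}}=1$, and an interpolation/Gagliardo--Nirenberg bound on $\|u\|_{L^{2n}}^{2n}$ that is absorbed into $\tfrac14\|u\|_{\mathcal{V}}^{2}$ using the unit $L^{2}$-norm. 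This last absorption is where care is needed and restricts the admissible $n$.

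The main obstacle I anticipate is treating the superlinear term $u^{2n-1}$ together with the two projection terms $\|u\|_{H^{2}_{0}}^{2}u$ and $\|u\|_{L^{2n}}^{2n}u$. For local well-posedness the Lipschitz estimate in $\mathcal{V}$ must be honest, which in higher dimension forces conditions of the form $n<n^{\ast}(d)$; for the global bound one must check that the Lagrange-multiplier contribution does not destroy the coercivity of $\mathcal{Y}$. Both difficulties are circumvented by exploiting the manifold constraint $|u|_{L^{2}}=1$, which turns what looks like a supercritical nonlinear reaction into an effectively bounded forcing at the $L^{2}$-level. Since the statement is imported verbatim from \cite{JS}, I would conclude by invoking that reference for the technical details of the Galerkin limit and the admissible range of $n$.
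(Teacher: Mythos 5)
First, a point of order: the paper does not prove this theorem at all --- it is imported from \cite{JS} and used as a black box, so there is no in-paper proof to compare your argument against. Judged on its own terms, your strategy (mild formulation plus Banach fixed point for local existence, tangency of the projected vector field for invariance of $\mathcal{M}$, and the projected-gradient-flow structure for the energy identity and the a priori bound) is the standard and correct route, and is presumably close to what \cite{JS} does.

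That said, there is one concrete error to fix. You write $\mathcal{Y}'(u)=Au-u^{2n-1}$ and then claim $\pi_u\mathcal{Y}'(u)=Au-F(u)$, i.e.\ that \eqref{PB} is $u_t=-\pi_u\mathcal{Y}'(u)$. Neither holds for the $\mathcal{Y}$ displayed in the statement. With $\|u\|^2_{\mathcal{V}}=\|u\|^2_{L^2}+2\|\nabla u\|^2_{L^2}+\|\Delta u\|^2_{L^2}$ one has $\mathcal{Y}'(u)=Au+u\mp u^{2n-1}$ according to the sign of the $\frac{1}{2n}\|u\|^{2n}_{L^{2n}}$ term (the $+u$ you dropped is harmless since $\pi_u u=0$ on $\mathcal{M}$, and it is exactly what accounts for the disappearance of $-au$). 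For the equation, whose unprojected right-hand side carries $-u^{2n-1}$, to be the negative projected gradient flow of $\mathcal{Y}$, the energy must be $\mathcal{Y}(u)=\frac12\|u\|^2_{\mathcal{V}}+\frac{1}{2n}\|u\|^{2n}_{L^{2n}}$ --- the plus sign, as in \eqref{energynorm} and throughout the proof of Theorem \ref{Pre_compact} --- not the minus sign printed in the theorem statement. With the minus sign, your chain $\frac{d}{dt}\mathcal{Y}(u)=\langle\pi_u\mathcal{Y}'(u),u_t\rangle=-\|u_t\|^2_{L^2}$ breaks at the last equality: the $u^{2n-1}$ and $\|u\|^{2n}_{L^{2n}}u$ contributions enter with the wrong sign and do not cancel against $F(u)$. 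Once the sign is corrected, the Gagliardo--Nirenberg absorption you flag as the delicate step evaporates: $\frac12\|u\|^2_{\mathcal{V}}\le\mathcal{Y}(u(t))\le\mathcal{Y}(u_0)$ is immediate from the energy identity, which is evidently the bound intended (the missing square on $\|u(t)\|_{\mathcal{V}}$ in the statement is another typo; the rest of the paper uses $\|u\|^2_{\mathcal{V}}\le 2\,\mathcal{Y}(u_0)$). You should verify which sign makes $u_t=-\pi_u\mathcal{Y}'(u)$ an identity before building the rest of the argument on it.
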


Now, we present some definitions and abstract results from Chapter 6 of \cite{Songu} regarding the convergence to equilibrium (stationary solutions) and global attractors.

\begin{corollary} \cite{Songu}  \label{2.2}
Suppose that $\mathcal{H}$ is a complete metric space and $S(t)$ is a nonlinear $C_0$ semigroup defined on $\mathcal{H}$. Let $x \in H$. If there is $t_0 \geq 0$ such that
$$
\{u(t):t> t_0\}:=\bigcup_{t>t_0} S(t) x
$$
is relatively compact in $\mathcal{H}$, then the $\Omega$-limit set $\Omega(x)$ is a compact, connected invariant set.   
\end{corollary}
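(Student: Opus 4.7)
The statement is the classical theorem on $\omega$-limit sets. I would rely on the characterization
\[
\Omega(x) = \bigcap_{s \geq t_0} \overline{\bigcup_{t \geq s} S(t)x},
\]
which follows directly from the definition $\Omega(x) = \{y : \exists t_n \to \infty,\ S(t_n)x \to y\}$. The plan is to verify, in order, that $\Omega(x)$ is (i) nonempty and compact, (ii) positively and negatively invariant, and (iii) connected; each step uses the relative compactness of the tail orbit $\{u(t):t>t_0\}$ in an essential way.

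For compactness, I would note that $\Omega(x)$ is the intersection of a decreasing family of closed sets contained in the compact set $K := \overline{\bigcup_{t \geq t_0}S(t)x}$, hence is closed inside a compact set and therefore compact. Nonemptiness is immediate: pick any sequence $t_n \to \infty$ with $t_n > t_0$; then $\{S(t_n)x\} \subset K$ admits a convergent subsequence whose limit lies in $\Omega(x)$.

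For invariance, I would argue both inclusions $S(\tau)\Omega(x) \subseteq \Omega(x)$ and $\Omega(x) \subseteq S(\tau)\Omega(x)$ for each $\tau \geq 0$. The forward inclusion uses only the semigroup property and the continuity of $S(\tau)$: if $S(t_n)x \to y$, then $S(\tau+t_n)x = S(\tau)S(t_n)x \to S(\tau)y$. The backward inclusion is the place where compactness is really needed: given $y \in \Omega(x)$ with $S(t_n)x \to y$, pass to the sequence $\{S(t_n-\tau)x\}$ for $t_n$ large; by relative compactness it has a subsequential limit $z \in \Omega(x)$, and continuity of $S(\tau)$ gives $S(\tau)z = y$.

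The main subtlety is connectedness, which I expect to be the key obstacle. I would argue by contradiction: suppose $\Omega(x) = A \sqcup B$ with $A, B$ nonempty, closed, and disjoint. Since $\Omega(x)$ is compact (hence normal), choose disjoint open sets $U \supset A$ and $V \supset B$ with $\overline{U} \cap \overline{V} = \emptyset$. The trajectory $t \mapsto S(t)x$ on $[t_0,\infty)$ is continuous, hence the image is connected; because the orbit accumulates on both $A$ and $B$, for arbitrarily large $t$ it visits both $U$ and $V$. By the intermediate-value property along this connected curve, there exists a sequence $t_n \to \infty$ with $S(t_n)x \in \mathcal{H} \setminus (U \cup V)$. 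Relative compactness gives a subsequential limit $z \in \Omega(x)$, but $z \notin A$ and $z \notin B$ since the complement of $U \cup V$ is disjoint from both, contradicting $\Omega(x) = A \cup B$. This closes the proof.
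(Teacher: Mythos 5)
Your proof is correct and complete: the compactness argument via the nested intersection $\Omega(x)=\bigcap_{s\geq t_0}\overline{\bigcup_{t\geq s}S(t)x}$, the two-sided invariance (with relative compactness used exactly where it is needed, for the backward inclusion), and the connectedness argument by contradiction using disjoint open neighborhoods and the connectedness of the continuous trajectory are all the standard and valid steps. Note, however, that the paper itself offers no proof of this statement --- it is quoted verbatim from \cite{Songu} as a known abstract result and used as a black box --- so there is no proof in the paper to compare against; your argument is simply the classical one that the cited reference would contain.
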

\begin{theorem}\cite{Songu} Let $\Gamma: \mathbb{R}^N \rightarrow \mathbb{R}$ be an analytic function in a neighborhood of a point a in $\mathbb{R}^N$. Then there exists $\sigma>0$ and $0<0<\frac{1}{2}$ such that $\forall x \in \mathbb{R}^N$,
$$
\|x-a\|<\sigma  \implies \|\nabla \Gamma(x)\| \geqslant|\Gamma(x)-\Gamma(a)|^{1-\theta}.
$$   
\end{theorem}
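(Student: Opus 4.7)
The plan is to establish this as the classical Łojasiewicz gradient inequality for real-analytic functions. After a translation and subtraction of a constant, I may assume $a = 0$ and $\Gamma(0) = 0$; neither operation affects $\|\nabla \Gamma\|$. If $\nabla \Gamma(0) \neq 0$, continuity gives $c > 0$ and $\sigma_0 > 0$ with $\|\nabla \Gamma(x)\| \geq c$ on the ball $B_{\sigma_0}$; shrinking $\sigma \leq \sigma_0$ so that $|\Gamma(x)|^{1-\theta} \leq c$ on $B_\sigma$ (possible since $\Gamma$ is continuous and vanishes at $0$) closes this case for any $\theta \in (0, 1/2)$.

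The substantive case is $\nabla \Gamma(0) = 0$. For fixed $\theta \in (0, 1/2)$ consider the semi-analytic set
\begin{equation*}
S_\theta = \{\, x \in \mathbb{R}^N \setminus \{0\} : \|\nabla \Gamma(x)\|^2 < |\Gamma(x)|^{2(1-\theta)}\,\}.
\end{equation*}
If the desired inequality failed on every neighborhood of $0$, then $0 \in \overline{S_\theta}$. By the curve selection lemma for semi-analytic sets, there would exist a real-analytic arc $\gamma : [0,\epsilon) \to \mathbb{R}^N$ with $\gamma(0) = 0$ and $\gamma(t) \in S_\theta$ for $t \in (0,\epsilon)$. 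The whole strategy consists in pulling the presumptive counterexample onto this one-dimensional real-analytic branch, where everything becomes a computation of orders of vanishing.

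Set $g(t) = \Gamma(\gamma(t))$, analytic with $g(0) = 0$. Assuming $g \not\equiv 0$, expand $g(t) = c t^p + O(t^{p+1})$ and $\gamma'(t) = O(t^{q})$ with integers $p \geq 1$, $q \geq 0$ and $c \neq 0$; then $|g'(t)| \sim |cp|\,t^{p-1}$. The defining inequality of $S_\theta$ together with the chain rule gives
\begin{equation*}
|g'(t)| = |\langle \nabla \Gamma(\gamma(t)), \gamma'(t)\rangle| \leq \|\nabla \Gamma(\gamma(t))\|\,\|\gamma'(t)\| < |g(t)|^{1-\theta}\,\|\gamma'(t)\| = O\bigl(t^{p(1-\theta)+q}\bigr).
\end{equation*}
Comparing vanishing orders forces $p - 1 \geq p(1-\theta) + q$, i.e.\ $p\theta \geq 1 + q \geq 1$, so any choice $\theta < 1/p$ yields a contradiction. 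The degenerate subcase $g \equiv 0$ along $\gamma$ is trivial because $\Gamma(x) - \Gamma(a) = 0$ there, making the required inequality automatic.

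The main obstacle is the curve selection lemma itself, whose proof rests on the structure theory of semi-analytic (or subanalytic) sets, ultimately leaning on Hironaka-type resolution of singularities; this is the only deep tool in the argument. A secondary but unavoidable issue is that the admissible $\theta$ depends on the maximal vanishing order $p$ of $\Gamma$ along analytic arcs through $a$, so both $\sigma$ and $\theta$ are determined by the analytic germ of $\Gamma$ at $a$; making the choice uniform requires using finiteness results for semi-analytic sets on a sufficiently small neighborhood, so that $p$ stays bounded.
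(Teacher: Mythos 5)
This statement is quoted in the paper directly from \cite{Songu} as the classical finite-dimensional Łojasiewicz gradient inequality (note the typo $0<0<\frac{1}{2}$ for $0<\theta<\frac{1}{2}$), and the paper supplies no proof of it, so there is nothing in the paper to compare your argument against. Judged on its own terms, your sketch follows the standard skeleton: normalize $a=0$, $\Gamma(0)=0$; dispose of the case $\nabla\Gamma(0)\neq 0$ by continuity; apply the curve selection lemma to the exceptional set. Those parts are fine, modulo one fixable point: for irrational $\theta$ the set $S_\theta$ need not be semianalytic, so you must take $\theta=1/N$ rational and work with the inequality $\|\nabla\Gamma\|^{2N}<(\Gamma^2)^{N-1}$ before invoking curve selection.

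The genuine gap is in the closing step of the hard case. The chain rule gives you the necessary condition $p\theta\geq 1+q$ on the arc, and you propose to conclude by ``choosing $\theta<1/p$.'' But the arc is produced by curve selection \emph{after} $\theta$ is fixed, so $p=p_\theta$ depends on $\theta$, and the contradiction requires a bound on $p$ that is uniform over all admissible arcs. No such bound exists: the order of vanishing of $\Gamma$ along analytic arcs through $a$ is unbounded in general. Take $\Gamma(x,y)=y^2$ and $\gamma(t)=(t,t^k)$: here $p=2k$ and $q=0$, so your necessary condition reads $\theta\geq 1/(2k)$ and is met by every fixed $\theta>0$ once $k$ is large, even though a direct computation shows these arcs do \emph{not} lie in $S_\theta$ for any $\theta<1/2$. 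The example also shows why your estimate is too lossy: the bound $|g'(t)|\leq\|\nabla\Gamma(\gamma(t))\|\,\|\gamma'(t)\|$ discards the component of $\nabla\Gamma$ transverse to the arc, which is precisely the dominant part when the arc runs nearly along a level set of $\Gamma$. Consequently your final remark, that ``finiteness results for semianalytic sets'' keep $p$ bounded, is not a correct description of what is missing; obtaining a uniform exponent is the actual content of the theorem and needs a different mechanism (for instance Łojasiewicz's comparison inequality for pairs of semianalytic functions after showing $\Gamma$ is locally constant on the critical set, or rectilinearization/resolution applied to $\Gamma$ itself). As written, the sketch only shows that each individual analytic arc eventually satisfies a gradient inequality with an arc-dependent exponent, not the stated uniform estimate on a neighborhood of $a$.
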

\begin{definition}\cite{Songu} 
 Suppose that $\mathcal{H}$ is a complete metric space, and $S(t)$ is a nonlinear $C_0$-semigroup of operators defined on $\mathcal{H}$. set $\mathcal{A} \subset H$ is called an attractor if the following hold:\\
(i) $\mathcal{A}$ is an invariant set, i.e.,
$$
S(t) \mathcal{A}=\mathcal{A}, \quad \forall t \geq 0 .
$$
(ii) $\mathcal{A}$ possesses an open neighborhood $\mathcal{U}$ such that for any element $u_0 \in \mathcal{U}$, as $t \rightarrow+\infty, S(t) u_0$ converges to $\mathcal{A}$, i.e.,
$$
\operatorname{dist}\left(S(t) u_0, \mathcal{A}\right)=\inf _{y \in \mathcal{A}} d\left(S(t) u_0, y\right) \rightarrow 0 .
$$   
\end{definition}
\begin{definition}\cite{Songu} 
If $\mathcal{A}$ is a compact attractor, and it attracts bounded sets of $\mathcal{H}$, then $\mathcal{A}$ is called a global or universal attractor.  
\end{definition}
\begin{theorem}\cite{Songu} \label{attractor}
Suppose that $\mathcal{H}$ is a Banach space and $S(t)$ is a nonlinear $C_0$-semigroup defined on $\mathcal{H}$, satisfying the following conditions:\\
(i) there exists a bounded absorbing set $\mathcal{B}_0$;\\
(ii) for any bounded set $\mathcal{B}$, there is $t_0(\mathcal{B}) \geq 0$ depending on $\mathcal{B}$ such that
$$
\bigcup_{t \geq t_0(\mathcal{B})} S(t) \mathcal{B}
$$
is relatively compact in $\mathcal{H}$. Then $\mathcal{A}=\omega\left(\mathcal{B}_0\right)$ is a global attractor.    
\end{theorem}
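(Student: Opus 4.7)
The plan is to set $\mathcal{A}:=\omega(\mathcal{B}_0)=\bigcap_{s\geq 0}\overline{\bigcup_{t\geq s}S(t)\mathcal{B}_0}$ and to verify in turn the three defining properties of a global attractor: nonempty and compact, invariant under $S(t)$, and attracting for every bounded $\mathcal{B}\subset\mathcal{H}$.

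First I would show that $\mathcal{A}$ is nonempty and compact. Pick arbitrary sequences $t_n\to\infty$ and $u_n\in\mathcal{B}_0$; by hypothesis (ii) the set $\bigcup_{t\geq t_0(\mathcal{B}_0)}S(t)\mathcal{B}_0$ is relatively compact, so $\{S(t_n)u_n\}$ has a convergent subsequence whose limit lies in $\mathcal{A}$ by the very definition of the $\omega$-limit set. Compactness of $\mathcal{A}$ follows by a diagonal argument: any sequence in $\mathcal{A}$ is obtained as a limit of points of the form $S(t_n^{(k)})u_n^{(k)}$, and relative compactness again supplies a subsequence converging to a point in $\mathcal{A}$, so $\mathcal{A}$ is sequentially (hence topologically) compact in the metric space $\mathcal{H}$.

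Next I would prove invariance $S(t)\mathcal{A}=\mathcal{A}$. Forward invariance $S(t)\mathcal{A}\subset\mathcal{A}$ follows from the continuity of $S(t)$ and the semigroup property: if $y=\lim_k S(t_{n_k})u_{n_k}\in\mathcal{A}$, then $S(t)y=\lim_k S(t+t_{n_k})u_{n_k}\in\mathcal{A}$. The reverse inclusion $\mathcal{A}\subset S(t)\mathcal{A}$ is the step I expect to be the main obstacle, because it requires producing a preimage without assuming injectivity of $S(t)$: given $y=\lim_k S(t_{n_k})u_{n_k}\in\mathcal{A}$, I would consider the sequence $\{S(t_{n_k}-t)u_{n_k}\}$ (well defined for large $k$), extract a further subsequence converging to some $z$ by (ii), verify $z\in\mathcal{A}$ from the definition, and use continuity of $S(t)$ to conclude $S(t)z=y$.

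Finally, for the attracting property I would argue by contradiction. Suppose a bounded set $\mathcal{B}\subset\mathcal{H}$ is not attracted by $\mathcal{A}$; then there exist $\varepsilon>0$, $t_n\to\infty$, and $u_n\in\mathcal{B}$ with $\operatorname{dist}(S(t_n)u_n,\mathcal{A})\geq\varepsilon$. Using the absorbing property (i), pick $\tau=\tau(\mathcal{B})$ so that $S(\tau)u_n\in\mathcal{B}_0$ for all large $n$, and write $S(t_n)u_n=S(t_n-\tau)[S(\tau)u_n]$. Hypothesis (ii) applied to $\mathcal{B}_0$ then yields a subsequence converging to some limit point, which by construction belongs to $\omega(\mathcal{B}_0)=\mathcal{A}$, contradicting the $\varepsilon$-separation. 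The three verifications together identify $\mathcal{A}=\omega(\mathcal{B}_0)$ as the sought global attractor.
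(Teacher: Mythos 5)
The paper does not actually prove this statement: Theorem \ref{attractor} is quoted verbatim from Chapter 6 of the cited monograph \cite{Songu} and is used as a black box in Section 5, so there is no in-paper argument to compare yours against. Judged on its own, your proposal is correct and is the standard existence proof for global attractors (as in Temam or Hale): nonemptiness and compactness of $\omega(\mathcal{B}_0)$ from hypothesis (ii), two-sided invariance via the semigroup property and continuity of each $S(t)$ (with the backward inclusion handled, as you rightly flag, by extracting a convergent subsequence from $S(t_{n_k}-t)u_{n_k}$ rather than by inverting $S(t)$), and attraction of bounded sets by contradiction, funnelling an arbitrary bounded $\mathcal{B}$ through the absorbing set $\mathcal{B}_0$ before applying (ii). Two minor remarks: your diagonal argument for compactness can be replaced by the simpler observation that $\omega(\mathcal{B}_0)$ is a closed subset of the compact set $\overline{\bigcup_{t\geq t_0(\mathcal{B}_0)}S(t)\mathcal{B}_0}$; and to match the paper's definition of attractor you should note explicitly that attraction of all bounded sets supplies the required open attracted neighbourhood $\mathcal{U}$ (any bounded open set containing the compact $\mathcal{A}$). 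Neither point is a gap in substance.
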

Moreover, for the convenience of the reader, we present the following  \cite{temam2000navier}, \cite{hussain2015analysis} (Lemma 1.2, Chapter 3).

\begin{lemma}[Lemma III $1.2$, \cite{temam2000navier}, \cite{hussain2015analysis}]
\label{1-tem}Let $\mathcal{V},\mathcal{H}$ and $\mathcal{V}^{\prime }$ be three Hilbert spaces with $\mathcal{V}^{\prime }$ being the dual space of $\mathcal{V}$ and each included and dense in the
following one
\begin{equation*}
\mathcal{V}\hookrightarrow   \mathcal{H}\cong \mathcal{H}^{\prime }\hookrightarrow  \mathcal{V}^{\prime }.
\end{equation*}
If $u$ belongs to $\mathcal{L}^{2}(0,T;\mathcal{V})$ and its weak derivative $\frac{\partial u}{\partial t}$ belongs to $\mathcal{L}^{2}(0,T;\mathcal{V}^{\prime })$ then there exists $\widetilde{u}\in $ $\mathcal{L}^{2}(0,T;\mathcal{V})\cap C\left( \left[ 0,T\right] ;\mathcal{V}\right) $
such that $\widetilde{u}=u$ a.e. and we have the following energy equality:
\begin{equation*}
\left\vert u(t)\right\vert^{2}=\left\vert u_{0}\right\vert^{2}+2\int_{0}^{t}\left\langle u^{\prime }\left( s\right) ,u\left( s\right)\right\rangle ds, \, \mbox{ for all }  t\in \left[ 0,T\right].
\end{equation*}
\end{lemma}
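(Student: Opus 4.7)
The plan is to invoke the classical Lions--Magenes density argument based on mollification in time: the energy identity is elementary for smooth functions, and one propagates it to the full hypothesis class by approximation. The three ingredients I need are (i) a time extension of $u$ to all of $\R$ that preserves Bochner $L^2$ regularity of both $u$ and $u'$, (ii) a time mollification of that extension so that the chain rule can be applied pointwise, and (iii) a limit passage that recovers the identity in $L^2$ plus a continuity upgrade.

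First I would extend $u$ from $(0,T)$ to $\R$ by a reflection-then-cutoff: set $\bar u(t) = u(-t)$ on $[-T,0]$, $\bar u(t) = u(2T-t)$ on $[T,2T]$, and multiply by a smooth compactly supported time cutoff. A direct distributional computation shows that the weak derivative of $\bar u$ on $\R$ is the corresponding reflection of $u'$, so $\bar u \in L^2(\R;\mathcal{V})$ with $\bar u' \in L^2(\R;\mathcal{V}')$. Next I convolve in time with a smooth symmetric mollifier $\rho_\epsilon$ to obtain $u_\epsilon := \rho_\epsilon * \bar u \in C^\infty(\R;\mathcal{V})$; standard estimates give $u_\epsilon \to u$ in $L^2(0,T;\mathcal{V})$ and $u_\epsilon' \to u'$ in $L^2(0,T;\mathcal{V}')$ as $\epsilon \to 0$.

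For each smooth approximant the Hilbert-space chain rule is immediate:
\[
\tfrac{d}{dt}|u_\epsilon(t)|_{\mathcal{H}}^2 = 2\langle u_\epsilon'(t), u_\epsilon(t)\rangle_{\mathcal{V}',\mathcal{V}}.
\]
Integrating on $[s,t]$ gives the energy identity for $u_\epsilon$. The right-hand side integral passes to the limit because the pairing $\langle u_\epsilon'(\cdot), u_\epsilon(\cdot)\rangle$ converges to $\langle u'(\cdot), u(\cdot)\rangle$ in $L^1(0,T)$ via the product of the two strong Bochner convergences. Extracting a subsequence so that $|u_\epsilon(s)|_{\mathcal{H}}^2 \to |u(s)|_{\mathcal{H}}^2$ for a.e.\ $s$ yields the identity for a.e.\ pair $(s,t)$. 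Since the right-hand side is absolutely continuous in $t$, the map $t \mapsto |u(t)|_{\mathcal{H}}^2$ admits a continuous representative; combining this with the weak continuity of $u$ as a map into $\mathcal{H}$ (obtained by testing $t \mapsto \langle u(t), v\rangle$ against arbitrary $v \in \mathcal{V}$ and noting that this scalar function lies in $W^{1,1}(0,T)$) upgrades to strong continuity, producing the representative $\widetilde u \in C([0,T];\mathcal{H})$. Specializing to $s = 0$ then gives the stated identity with initial value $u_0 = \widetilde u(0)$.

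The main obstacle is the identification step in the extension: one must verify that the reflection $\bar u$ really has the pointwise-reflected $u'$ as its distributional derivative on all of $\R$, so that mollification commutes with differentiation in the sense needed above. A secondary subtlety is that $\langle u'(r), u(r)\rangle$ is only defined for a.e.\ $r$ — this is exactly the sense in which the pairing makes sense once the continuous representative is identified (so that $\widetilde u(r)\in \mathcal{V}$ for a.e.\ $r$), and integrability follows from Cauchy--Schwarz applied to the $L^2(0,T;\mathcal{V})\times L^2(0,T;\mathcal{V}')$ duality.
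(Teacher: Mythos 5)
The paper offers no proof of this lemma---it is quoted as a known result from Temam (Lemma III.1.2) and Hussain's thesis---and your reflection-plus-time-mollification argument is precisely the classical proof given in those sources (smooth chain rule for the approximants, $L^1$ convergence of the duality pairing, then the weak-plus-norm continuity upgrade), so it is correct and matches the intended reference. One remark: your argument correctly produces the continuous representative in $C\left(\left[0,T\right];\mathcal{H}\right)$, whereas the paper's statement misprints this as $C\left(\left[0,T\right];\mathcal{V}\right)$, which is false in general under these hypotheses.
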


\section{{\Large Lojasiewicz–Simon Inequality and Convergence to equilibrium}}

To prove that orbits of the global solution obtained in theorem \ref{global_sol_thm}  converges to an equilibrium (which is essentially is a solution stationary solution to problem \eqref{PB}), we should follow the following procedure. \\
Firstly,  show that orbit $\left\{ u(t): t\geq 1\right\}$ is precompact in $\mathcal{V}$ and the corresponding omega limit set $\Omega(u_0)$ is compact in $\mathcal{V}$. Secondly, prove a version of Lojasiewicz–Simon Inequality satisfied by the energy of the system. Finally,  prove the convergence of orbits of global solution converges, in $\mathcal{V}$-norm, to stationary solutions living inside $\Omega(u_0)$. \\

\begin{theorem}{\label{Pre_compact}}

Let the solution $u(t)$ follow theorem (\ref{global_sol_thm}); then, the orbit $\{ u(t); t\geq 1\}$ is pre-compact in $\mathcal{V}$.
\begin{proof}
    Recall that $\{ u(t); t\geq 1\}$  is precompact in $\mathcal{V}$ if it is bounded in $D(A^{\mu})$.\\ 
    
    What we are about to do is that  $\{ u(t); t\geq 1\}$ is bounded in $D(A^{\mu})$, ~~for $\mu > \frac{1}{2}$. \text{Where,}~$A= \Delta^{2}-2 \Delta$ \\
    \\
From the variation in the constant formula, we have:
\begin{align*}
    A^{\mu}u(t)= A^{\mu} e^{-At}u_{0}+ \int^{t}_{0} A^{\mu} e^{-A(t-p)} F(u(p)) ~dp
\end{align*}
Where $ F(u)=\|  u\|^{2}_{{H}^{2}_{0}} ~u + 2\|    u\|^{2}_{{H}^{1}_{0}} ~u  +\| u\|^{2n}_{{L}^{2n}} u- u^{2n-1}  $

\begin{align*}
\left|A^{\mu}u(t) \right|_{\mathcal{H}} & = \left|  A^{\mu} e^{-At}u_{0}+ \int^{t}_{0} A^{\mu} e^{-A(t-p)} F(u(p)) ~dp\right|_{\mathcal{H}}  \leq \left| A^{\mu} e^{-At}u_{0}\right|_{\mathcal{H}} + \left|\int^{t}_{0} A^{\mu} e^{-A(t-p)} F(u(p)) ~dp \right|_{\mathcal{H}}\\
 & \leq  \left|A^{\mu} e^{-At}u_{0} \right|_{\mathcal{H}} + \int^{t}_{0} \left|A^{\mu} e^{-A(t-p)} F(u(p))\right|_{\mathcal{H}} ~dp
\end{align*}

As by using proposition 1.23 in \cite{henry2006geometric} , we have $ \left|A^{\mu} e^{-At} \right|_{\mathcal{H}} \leq M_{\mu}t^{-\mu} e^{-\delta t}$\\
therefore, 
\begin{align*}
    \left|A^{\mu}u(t) \right|_{\mathcal{H}} & \leq M_{\mu}t^{-\mu} e^{-\delta t} \left|u_{0}\right|_{\mathcal{H}}+ \int^{t}_{0} M_{\mu} t^{t-\mu}e^{-\delta(t-p)} \left|F(u(p))\right|_{\mathcal{H}} ~dp
\end{align*}
As $u_{0} \in \mathcal{M}$ so $ \left|u_{0}\right|_{\mathcal{H}}=1 $,
\begin{align}{\label{Main_exp_compact}}
     \left|A^{\mu}u(t) \right|_{\mathcal{H}} & \leq M_{\mu}t^{-\mu} e^{-\delta t} + \int^{t}_{0} M_{\mu} (t-p)^{-\mu}e^{-\delta(t-p)} \left|F(u(p))\right|_{\mathcal{H}} ~dp
\end{align}
Now we will compute $\left|F(u(p))\right|_{\mathcal{H}} $\\

\begin{align*}
    \left|F(u)\right|_{\mathcal{H}}& =\left| \|  u\|^{2}_{{H}^{2}_{0}} ~u + 2\|    u\|^{2}_{{H}^{1}_{0}} ~u  +\| u\|^{2n}_{{L}^{2n}} u- u^{2n-1} \right|_{\mathcal{H}}  \leq  \|  u\|^{2}_{{H}^{2}_{0}} ~|u|_{\mathcal{H}} + 2\|    u\|^{2}_{{H}^{1}_{0}} ~|u|_{\mathcal{H}} +\| u\|^{2n}_{{L}^{2n}} |u|_{\mathcal{H}}+ \left| u^{2n-1} \right|_{\mathcal{H}}
\end{align*}
 Because $u \in \mathcal{M}$, $ \left|u\right|_{\mathcal{H}}=1 $. And $\mathcal{V} \hookleftarrow {{H}^{2}_{0}} $, $\mathcal{V} \hookleftarrow {{H}^{1}_{0}} $ and $\mathcal{V} \hookleftarrow {L^{2n}} $ is therefore $  \|  u\|^{2}_{{H}^{2}_{0}}  \leq \|  u\|^{2}_{\mathcal{V}}$, and $  \|  u\|^{2}_{{H}^{1}_{0}}  \leq \|  u\|^{2}_{\mathcal{V}}$  it follows:
\begin{align*}
    \left|F(u)\right|_{\mathcal{H}}& \leq \|  u\|^{2}_{\mathcal{V}}  + 2\|    u\|^{2}_{\mathcal{V}}  +\| u\|^{2n}_{L^{2n}} + \left| u^{2n-1} \right|_{\mathcal{H}}
\end{align*} 

From the energy function $  \mathcal{Y}(u) = \frac{1}{2} \|u\|^{2}_{\mathcal{V}} + \frac{1}{2n} \|u\|^{2n}_{L^{2n}}, ~~~~ n \in N  $ we obtain

\begin{align*}
    \|  u\|^{2}_{\mathcal{V}} \leq 2 ~\mathcal{Y}(u_{0}),  ~~\text{and}~~
     \|  u\|^{2n}_{L^{2n}} \leq 2n ~\mathcal{Y}(u_{0})
\end{align*}
Therefore, 
\begin{align}{\label{F(u(p)}}
    \left|F(u(p))\right|_{\mathcal{H}}& \leq (4+2n) ~\mathcal{Y}(u_{0})+ \left| u^{2n-1} \right|_{\mathcal{H}}
\end{align}

Now consider the expression $\left| u^{2n-1} \right|_{\mathcal{H}}$ \\

\begin{align*}
    \left| u^{2n-1} \right|^{2}_{\mathcal{H}} & = \int_{D}{\left(u^{2n-1}(p)\right)^{2}} ~dp \\
    \left| u^{2n-1} \right|_{\mathcal{H}} & =\left( \int_{D}{u^{4n-2}(p) ~dp }\right)^{\frac{1}{2}}= \left(\left( \int_{D}{u^{4n-2}(p) ~dp }\right)^{\frac{1}{4n-2}}\right)^{2n-1}\|u\|^{2n-1}_{L^{4n-2}}
\end{align*}
As $ \mathcal{V} \hookrightarrow L^{4n-2}$ so there is the constant  $K$ such that $ \|u\|^{2n-1}_{L^{4n-2}} \leq K^{2n-1} \|u\|^{2n-1}_{\mathcal{V}}$

\begin{align}{\label{embedig_4n-2}}
     \left| u^{2n-1} \right|_{\mathcal{H}} & \leq  K^{2n-1}\|u\|^{2n-1}_{\mathcal{V}}
\end{align}
By using (\ref{embedig_4n-2}), (\ref{F(u(p)}), and $\|u\|^{2n-1}_{\mathcal{V}} \leq 2^{2n-1} \left( \mathcal{Y}(u_{0})\right)^{2n-1}$, it follows that
 
\begin{align}{\label{F_final}}
    \left|F(u(p))\right|_{\mathcal{H}}& \leq (4+2n) ~\mathcal{Y}(u_{0})+ 2^{2n-1} K^{2n-1} \left( \mathcal{Y}(u_{0})\right)^{2n-1} : = N < \infty
\end{align}
Using (\ref{F_final}) and (\ref{Main_exp_compact}), it follows that 
\begin{align*}
    \left|A^{\mu}u(t) \right|_{\mathcal{H}} & \leq M_{\mu}t^{-\mu} e^{-\delta t} +M_{\mu}N \int^{t}_{0}  (t-p)^{-\mu}e^{-\delta(t-p)}  ~dp \\
\left|u(t)\right|_{D(A^{\mu)}} & \leq M_{\mu}t^{-\mu} e^{-\delta t} +M_{\mu}N \int^{\infty}_{0}  (t-p)^{-\mu}e^{-\delta(t-p)}  ~dp\leq M_{\mu}t^{-\mu} e^{-\mu t} +M_{\mu}N ~\Gamma(1-\mu) := ~ Q_{\mu} < \infty
\end{align*}

Thus, $\{ u(t); t\geq 1\}$ is bounded $ D(A^{\mu})$, where $\mu > \frac{1}{2}$, and the orbit $\{ u(t); t\geq 1\}$ is pre compact in $\mathcal{V}$

\end{proof}  
\end{theorem}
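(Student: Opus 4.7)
My plan is to exploit the smoothing property of the analytic semigroup $e^{-At}$ generated by $A=\Delta^{2}-2\Delta$ on $\mathcal{H}$. Since $\mathcal{V}$ lies strictly between $\mathcal{H}$ and $\mathcal{E}=D(A)$, the fractional-power space $D(A^{\mu})$ embeds compactly in $\mathcal{V}$ whenever $\mu>\tfrac{1}{2}$; consequently it suffices to prove that the orbit $\{u(t):t\ge 1\}$ is bounded in $D(A^{\mu})$ for some such $\mu$ fixed in $(\tfrac{1}{2},1)$.

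First, I would write the mild formulation $u(t)=e^{-At}u_{0}+\int_{0}^{t}e^{-A(t-s)}F(u(s))\,ds$ and apply $A^{\mu}$ to both sides. Using the standard analytic-semigroup estimate $\|A^{\mu}e^{-At}\|_{\mathcal{L}(\mathcal{H})}\le M_{\mu}t^{-\mu}e^{-\delta t}$ (Henry), this gives
\begin{equation*}
|u(t)|_{D(A^{\mu})}\le M_{\mu}t^{-\mu}e^{-\delta t}|u_{0}|_{\mathcal{H}}+M_{\mu}\int_{0}^{t}(t-s)^{-\mu}e^{-\delta(t-s)}|F(u(s))|_{\mathcal{H}}\,ds.
\end{equation*}
The first term is uniformly bounded for $t\ge 1$, so the whole task reduces to a uniform-in-time bound on $|F(u(s))|_{\mathcal{H}}$.

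Next, I would bound each of the four contributions to $F(u)=\|u\|_{H^{2}_{0}}^{2}u+2\|u\|_{H^{1}_{0}}^{2}u+\|u\|_{L^{2n}}^{2n}u-u^{2n-1}$ separately. Since $u(s)\in\mathcal{M}$, $|u(s)|_{\mathcal{H}}=1$, and by the energy identity \ref{der} in Theorem \ref{global_sol_thm} one has $\|u(s)\|_{\mathcal{V}}^{2}\le 2\mathcal{Y}(u_{0})$ for all $s\ge 0$. The first three pieces are therefore dominated by constants depending only on $u_{0}$, using the embeddings $\mathcal{V}\hookrightarrow H^{1}_{0},H^{2}_{0},L^{2n}$. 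For the genuinely nonlinear piece $u^{2n-1}$, I would write $|u^{2n-1}|_{\mathcal{H}}=\|u\|_{L^{4n-2}}^{2n-1}$ and invoke a Sobolev embedding $\mathcal{V}\hookrightarrow L^{4n-2}$ (available, for the stated range of $n$, provided the ambient dimension is not too large) to get $|u^{2n-1}|_{\mathcal{H}}\le C\|u\|_{\mathcal{V}}^{2n-1}$, uniformly bounded by the energy estimate. Collecting everything yields $|F(u(s))|_{\mathcal{H}}\le N<\infty$ for all $s\ge 0$.

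Finally, inserting this uniform bound and using $\int_{0}^{\infty}r^{-\mu}e^{-\delta r}\,dr=\delta^{\mu-1}\Gamma(1-\mu)<\infty$ for $\mu\in(\tfrac{1}{2},1)$ produces a finite constant $Q_{\mu}$ with $|u(t)|_{D(A^{\mu})}\le Q_{\mu}$ for every $t\ge 1$; relative compactness in $\mathcal{V}$ then follows from the compact embedding $D(A^{\mu})\hookrightarrow\mathcal{V}$. The main obstacle I anticipate is the nonlinear bound: one must certify that the Sobolev embedding $\mathcal{V}\hookrightarrow L^{4n-2}$ genuinely holds in the relevant dimension for the given $n$, and that the dissipation estimate from Theorem \ref{global_sol_thm} delivers a truly global-in-time bound of $\|u(s)\|_{\mathcal{V}}$, not merely one on the existence interval.
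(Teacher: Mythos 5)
Your proposal is correct and follows essentially the same route as the paper: the mild formulation with $A^{\mu}$ applied, the Henry smoothing estimate $\lvert A^{\mu}e^{-At}\rvert \le M_{\mu}t^{-\mu}e^{-\delta t}$, a uniform bound on $\lvert F(u(s))\rvert_{\mathcal{H}}$ via the manifold constraint, the energy bound $\lVert u\rVert_{\mathcal{V}}^{2}\le 2\mathcal{Y}(u_{0})$ and the embedding $\mathcal{V}\hookrightarrow L^{4n-2}$, and finally the convolution integral controlled by $\Gamma(1-\mu)$ together with the compact embedding $D(A^{\mu})\hookrightarrow\mathcal{V}$ for $\mu>\tfrac{1}{2}$. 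Your closing caveats (dimension restriction for the Sobolev embedding and the need for a genuinely global-in-time energy bound) are in fact points the paper glosses over, so no gap relative to the paper's own argument.
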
 

\begin{corollary}{\label{Coro-PreCompact}}
The $\Omega$- limit set ~ $\Omega(u_{0}) = \cap_{q \geq 1} \overline{\{ u(t); t \geq q\}}$ exists and is compact in $\mathcal{V}$.
\begin{proof}
   In theorem (\ref{Pre_compact}), we have shown that $\{ u(t); t \geq q\}$ is pre-compact in $\mathcal{V}$ for $q\geq 1$. As the closure of the pre-compact set is also a pre-compact set, $\overline{\{ u(t); t \geq q\}}$ is pre-compact in $\mathcal{V}$ for $q\geq 1$. \\
    Additionally, $\overline{\{ u(t); t \geq q\}}$ is closed, and thus, is complete in $\mathcal{V}$-norm. Hence, the completion and pre-compactness of $\overline{\{ u(t); t \geq q\}}$ implies that $\overline{\{ u(t); t \geq q\}}$ is compact $ \forall q\geq 1 \in \mathcal{V}$. \\
    Hence, $\Omega(u_{0})$ is the intersection of the decreasing nonempty compact sets and is not an empty set in $\mathcal{V}$.
\end{proof}
\end{corollary}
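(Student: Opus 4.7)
The strategy is to deduce compactness of $\Omega(u_0)$ directly from Theorem \ref{Pre_compact}, using only elementary topology in the Banach space $\mathcal{V}$. Theorem \ref{Pre_compact} has already done the heavy lifting by establishing the uniform bound $|u(t)|_{D(A^\mu)} \leq Q_\mu$ for $t \geq 1$ with $\mu > \tfrac{1}{2}$, which together with the compact embedding $D(A^\mu) \hookrightarrow \mathcal{V}$ yields precompactness of the full forward orbit. I would not attempt any new estimate on the semigroup; the whole argument is a set-theoretic packaging of that fact.

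\textbf{Steps in order.} First, for each fixed $q \geq 1$, observe that $\{u(t): t \geq q\} \subset \{u(t): t \geq 1\}$, so it inherits precompactness from Theorem \ref{Pre_compact}. Second, since $\mathcal{V}$ is complete (it is a Banach space under the norm introduced in Assumption \ref{Ass_2.2.2}), the closure $K_q := \overline{\{u(t): t \geq q\}}$ of a precompact set is compact. Third, the family $\{K_q\}_{q \geq 1}$ is decreasing: if $q_1 \leq q_2$ then $\{u(t): t \geq q_2\} \subset \{u(t): t \geq q_1\}$, and taking closures preserves this inclusion. Each $K_q$ is nonempty (it contains $u(q)$). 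Fourth, invoke the finite intersection property: a decreasing family of nonempty compact subsets of a Hausdorff space has nonempty intersection, so $\Omega(u_0) = \bigcap_{q \geq 1} K_q \neq \emptyset$. Finally, $\Omega(u_0)$ is an intersection of closed sets, hence closed, and it is contained in the compact set $K_1$, so it is itself compact in $\mathcal{V}$.

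\textbf{Main obstacle.} Honestly, there is essentially no obstacle here once Theorem \ref{Pre_compact} is in hand; the only point that requires even a line of thought is nonemptiness, which is the Cantor-type finite intersection argument. One small bookkeeping issue worth stating explicitly in the write-up is that $K_q$ is compact (not merely precompact), because later use of $\Omega(u_0)$ as a compact invariant set, in the spirit of Corollary \ref{2.2}, requires genuine compactness, and the passage from precompact to compact in $\mathcal{V}$ uses completeness of the ambient norm rather than any property of the equation.
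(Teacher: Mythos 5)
Your proposal is correct and follows essentially the same route as the paper: closures of precompact sets are compact by completeness of $\mathcal{V}$, the family is decreasing and nonempty, and the Cantor finite-intersection argument gives a nonempty compact $\Omega(u_0)$. If anything, you are slightly more careful than the paper in explicitly recording that $\Omega(u_0)$ itself is compact (closed subset of the compact $K_1$), a point the paper's proof leaves implicit.
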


Referencing the more detailed result of Proposition 1.3 of Jandoubi \cite{Jandoubi}, in the next theorem, we derive the version of the Lojasiewicz-Simon Inequality satisfied by the energy of the system (\ref{PB}).\\
It can be easily verified that  $\|u\|^{2n}_{L^{2n}}$ is analytic in $u$ in the following energy function $ \mathcal{Y}: \mathcal{V}\longrightarrow R $.

\begin{align}{\label{energynorm}}
    \mathcal{Y}(u) = \frac{1}{2} \|u\|^{2}_{\mathcal{V}} + \frac{1}{2n} \|u\|^{2n}_{L^{2n}}, ~~~~ n \in N 
\end{align} 
Moreover, suppose that $\mathcal{S}$ be collection of critical points of the problem (\ref{PB}), that is:

\begin{align*}
    \mathcal{S}:=\{\varphi: \|u\|^{2}_{\mathcal{V}}\varphi -\varphi, \varphi\vert_{\partial \mathcal{O}}=0 \}. 
\end{align*}

\begin{theorem}
\label{Lwoj} For a given $\varphi \in \mathcal{S} $ there exists $\theta \in \left( 0,\frac{1}{2}\right) $ and $\sigma >0$
such that
\begin{eqnarray}
\left\vert -  \Delta^{2}u+2 \Delta u  + \|  u\|^{2}_{{H}^{2}_{0}} ~u + 2\|    u\|^{2}_{{H}^{1}_{0}} ~u  +\| u\|^{2n}_{{L}^{2n}} u- u^{2n-1} \right\vert _{\mathcal{H}}
&\geq &\left\vert \mathcal{Y} (u)-\mathcal{Y} (\varphi )\right\vert ^{1-\theta } \label{LJ}
\end{eqnarray}
for all $u$, and such that $\left\Vert u-\varphi \right\Vert 
<\sigma $.
\end{theorem}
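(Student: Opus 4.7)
The plan is to cast \eqref{LJ} as the Lojasiewicz--Simon gradient inequality for the restricted energy $\mathcal{Y}|_\mathcal{M}$ at the critical point $\varphi \in \mathcal{S}$, and then apply the abstract framework of Jendoubi (Proposition 1.3, cited above). Repeating the integration-by-parts calculation from the Corollary preceding Theorem \ref{global_sol_thm} identifies the expression inside the norm on the left-hand side of \eqref{LJ} with $\pi_u \mathcal{Y}'(u)$, the Riemannian gradient of $\mathcal{Y}|_\mathcal{M}$ realized in $\mathcal{H}$. Thus it suffices to establish a bound of the form $|\pi_u \mathcal{Y}'(u)|_\mathcal{H} \geq |\mathcal{Y}(u) - \mathcal{Y}(\varphi)|^{1-\theta}$ on a $\mathcal{V}$-ball of radius $\sigma$ around $\varphi$.

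First, I would verify that $\mathcal{Y} : \mathcal{V} \to \mathbb{R}$ is real analytic. The quadratic piece $\tfrac{1}{2}\|u\|_\mathcal{V}^2$ is manifestly analytic, and, for $n \in \mathbb{N}$, the term $\tfrac{1}{2n}\|u\|_{L^{2n}}^{2n}$ extends to a continuous homogeneous polynomial of degree $2n$ on $\mathcal{V}$ via the Sobolev embedding $\mathcal{V} \hookrightarrow L^{2n}$, hence is analytic. Next I would compute the Hessian of $\mathcal{Y}$ at $\varphi$ restricted to $T_\varphi \mathcal{M} = \{h \in \mathcal{H} : \langle h, \varphi \rangle = 0\}$: it has the form $A + L_\varphi$ on $\mathcal{E} \cap T_\varphi \mathcal{M}$, where $A = \Delta^2 - 2\Delta$ is a self-adjoint isomorphism $\mathcal{E} \to \mathcal{H}$ and $L_\varphi$ consists of lower-order multiplication operators arising from the $L^{2n}$ nonlinearity together with finite-rank correction terms coming from the Lagrange multiplier associated with the constraint $|u|_\mathcal{H}=1$. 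Compactness of the embeddings $\mathcal{V} \hookrightarrow L^{4n-2} \hookrightarrow \mathcal{H}$ shows that $L_\varphi$ is $A$-compact, so $\mathcal{Y}''(\varphi)|_{T_\varphi \mathcal{M}}$ is a self-adjoint Fredholm operator of index $0$ with finite-dimensional kernel $K$.

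With analyticity and the Fredholm property in hand, the standard Lyapunov--Schmidt reduction in the analytic category produces an analytic graph map $\Psi : K \to K^\perp$ solving $(I - P)\,\mathcal{Y}'(\varphi + \xi + \Psi(\xi)) = 0$, where $P$ is the orthogonal projection onto $K$. The reduced functional $\Gamma(\xi) := \mathcal{Y}(\varphi + \xi + \Psi(\xi))$ is then a real-analytic function on the finite-dimensional space $K$, and the classical finite-dimensional Lojasiewicz inequality (stated just before Theorem \ref{Pre_compact}) applied to $\Gamma$ at $0$ yields $\theta \in (0, 1/2)$ and some small radius such that $\|\nabla \Gamma(\xi)\| \geq |\Gamma(\xi) - \Gamma(0)|^{1-\theta}$. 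Pulling this inequality back through $\Psi$ and using that, for $u$ close to $\varphi$ in $\mathcal{V}$, $|\pi_u \mathcal{Y}'(u)|_\mathcal{H}$ is comparable to $|P\,\mathcal{Y}'(\varphi + \xi + \Psi(\xi))|$ (up to a constant depending only on $\varphi$) delivers \eqref{LJ} on a sufficiently small $\mathcal{V}$-ball.

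The main obstacle I expect is the bookkeeping imposed by the spherical constraint. One must verify that the Lagrange-multiplier correction entering the Hessian does not destroy the Fredholm property, and that the variable projection $\pi_u$ appearing on the left-hand side of \eqref{LJ} can be replaced, up to controlled constants for $u$ near $\varphi$, by the fixed projection onto $T_\varphi \mathcal{M}$ used in the Lyapunov--Schmidt splitting; these two points are where the argument differs from the unconstrained case and must be handled carefully before the abstract Jendoubi-type machinery applies verbatim.
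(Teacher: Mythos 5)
Your plan is essentially the paper's own argument: both verify analyticity of $\mathcal{Y}$, show the linearization at $\varphi$ is self-adjoint Fredholm with finite-dimensional kernel, reduce to that kernel by an analytic local inversion (the paper's $T=M+\Pi$ with inverse $\Upsilon$ plays the role of your Lyapunov--Schmidt graph map $\Psi$), apply the finite-dimensional Lojasiewicz inequality to the reduced functional, and pull the estimate back via Lipschitz bounds together with the quadratic comparison $\left\vert \mathcal{Y}(u)-\Xi(\xi)\right\vert \leq K\left\vert M(v)\right\vert_{\mathcal{H}}^{2}$ and the elementary inequality $\left\vert p+q\right\vert^{1-\theta}\leq 2\left\vert p\right\vert^{1-\theta}+\left\vert q\right\vert^{1-\theta}$. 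The constraint bookkeeping you flag as the main obstacle is handled in the paper simply by taking $M$ to be the already-projected vector field from \eqref{PB} and running Jendoubi's scheme on it directly as a map $\mathcal{V}\rightarrow\mathcal{H}$.
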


\begin{proof}
Let us start the proof by constructing the linearization of our main problem (\ref{PB}) about the equilibrium point $\varphi$. Set $u=v+ \varphi$
\begin{align*}
    \varphi_{t} =u_{t}-v_{t} 
     &= \left(-\Delta^{2} u+2\Delta u+\|  u\|^{2}_{{H}^{2}_{0}} ~u + 2\|    u\|^{2}_{{H}^{1}_{0}} ~u  +\| u\|^{2n}_{{L}^{2n}} u- u^{2n-1}\right)\\
    & ~~~~- \left(-\Delta^{2} v+2\Delta v+\|  v\|^{2}_{{H}^{2}_{0}} ~v + 2\|    v\|^{2}_{{H}^{1}_{0}} ~v  +\| v\|^{2n}_{{L}^{2n}} v- v^{2n-1}\right)
\end{align*}

Recall that  $ \mathcal{V} =H^{1}_{0} \cap H^{2}$ is a Banach space with the norm defined by $  \|u\|^{2}_{\mathcal{V}} = \|u\|^{2}_{H^{1}_{0}} +  \|u\|^{2}_{H^{2}} =\|u\|^{2}_{L^{2} (\Omega)} +  2\|\nabla u\|^{2}_{L^{2} (\Omega)} +   \|\Delta u\|^{2}_{L^{2} (\Omega)} $. So, it follows that:

\begin{align}
    \varphi_{t} &= \left(-\Delta^{2} u+2\Delta u+\|u\|^{2}_{\mathcal{V}}u -\|u\|^{2}_{L^{2}}u  +\| u\|^{2n}_{{L}^{2n}} u- u^{2n-1}\right)\notag \\ &- \left(-\Delta^{2} v+2\Delta v+\|v\|^{2}_{\mathcal{V}}v -\|v\|^{2}_{L^{2}}v +\| v\|^{2n}_{{L}^{2n}} v- v^{2n-1}\right) \notag \\
    &=-\Delta^{2} \left(u-v\right)+2\Delta  \left(u-v\right) + \left( \|u\|^{2}_{\mathcal{V}}u-\|v\|^{2}_{\mathcal{V}}v\right)-\left( \|u\|^{2}_{L^{2}}u-\|v\|^{2}_{L^{2}}v\right)\notag \\&+\left(\| u\|^{2n}_{{L}^{2n}} u-\| v\|^{2n}_{{L}^{2n}} v \right) - \left(u^{2n-1}-v^{2n-1}\right) \notag \\
    &=: -\Delta^{2} \varphi +2\Delta  \varphi + F_{1}+ F_{2}+ F_{3}+ F_{4}
\end{align}

By neglecting the square and higher powers of $\varphi$, let us calculate $F_{1}, F_{2}, F_{3}, F_{4}$.

\begin{align}{\label{F_1}}
    F_{1}= \|u\|^{2}_{\mathcal{V}}u-\|v\|^{2}_{\mathcal{V}}v&= \left( \|u\|^{2}_{\mathcal{V}}-\|v\|^{2}_{\mathcal{V}}\right)u+\|v\|^{2}_{\mathcal{V}}(u-v) \notag \\
    &=\left( \|u\|^{2}_{\mathcal{V}}-\|v\|^{2}_{\mathcal{V}}\right)u+\|v\|^{2}_{\mathcal{V}}\varphi 
      =\left( \|u\|^{2}_{\mathcal{V}}-\|v\|^{2}_{\mathcal{V}}\right)(v +\varphi)+\|v\|^{2}_{\mathcal{V}}\varphi \notag \\
      &=\left( \|v + \varphi \|^{2}_{\mathcal{V}}-\|v\|^{2}_{\mathcal{V}}\right)(v +\varphi)+\|v\|^{2}_{\mathcal{V}}\varphi\notag\\& = \left( \|v \|^{2}_{\mathcal{V}}+ \|\varphi  \|^{2}_{\mathcal{V}}+ 2 \langle v, \varphi \rangle_{\mathcal{V}} -\|v\|^{2}_{\mathcal{V}}\right)(v +\varphi)+\|v\|^{2}_{\mathcal{V}}\varphi \notag \\
      &= \left( \|\varphi  \|^{2}_{\mathcal{V}}+2 \langle v, \varphi \rangle _{\mathcal{V}}\right)(v +\varphi)+\|v\|^{2}_{\mathcal{V}}\varphi=   2 v\langle v, \varphi \rangle_{\mathcal{V}}  +\|v\|^{2}_{\mathcal{V}}\varphi
\end{align}

And,

\begin{align}{\label{F_2}}
    F_{2}= \|u\|^{2}_{L^{2}}u-\|v\|^{2}_{L^{2}}v&= \left( \|u\|^{2}_{L^{2}}-\|v\|^{2}_{L^{2}}\right)u+\|v\|^{2}_{L^{2}}(u-v) =\left( \|u\|^{2}_{L^{2}}-\|v\|^{2}_{L^{2}}\right)u+\|v\|^{2}_{L^{2}}\varphi \notag \\
      &=\left( \|u\|^{2}_{L^{2}}-\|v\|^{2}_{L^{2}}\right)(v +\varphi)+\|v\|^{2}_{L^{2}}\varphi =\left( \|v + \varphi \|^{2}_{L^{2}}-\|v\|^{2}_{L^{2}}\right)(v +\varphi)+\|v\|^{2}_{L^{2}}\varphi \notag\\
      &= \left( \|v \|^{2}_{L^{2}}+ \|\varphi  \|^{2}_{L^{2}}- 2 \langle v, \varphi \rangle_{L^{2}} -\|v\|^{2}_{L^{2}}\right)(v +\varphi)+\|v\|^{2}_{L^{2}}\varphi \notag \\
      &= \left( \|\varphi  \|^{2}_{L^{2}}+2 \langle v, \varphi \rangle_{L^{2}} \right)(v +\varphi)+\|v\|^{2}_{L^{2}}\varphi=   2 v\langle v, \varphi \rangle_{L^{2}}  +\|v\|^{2}_{L^{2}}\varphi
\end{align}

Now, 

\begin{align} {\label{F_3}}
F_{3} &= u \| u \|_{L^{2n}}^{2n} - v \| v \|_{L^{2n}}^{2n} = \left( u - v \right) \| u \|_{L^{2n}}^{2n} + \left( \| u \|_{L^{2n}}^{2n} - \| v \|_{L^{2n}}^{2n} \right) v \notag \\
&= \varphi \| v + \varphi \|_{L^{2n}}^{2n} + \left( \| v + \varphi \|_{L^{2n}}^{2n} - \| v \|_{L^{2n}}^{2n} \right) v \notag \\
&= \varphi \| v \|_{L^{2n}}^{2n} + \left( \| v \|_{L^{2n}}^{2n} + 2n \left\langle v^{2n-1}, \varphi \right\rangle - \| v \|_{L^{2n}}^{2n} \right) v \notag \\
&= \varphi \| v \|_{L^{2n}}^{2n} + 2n \left\langle v^{2n-1}, \varphi \right\rangle v  
\end{align}

Finally, $F_{4},$

\begin{align}{\label{F_4}}
F_{4} &=u^{2n-1}-v^{2n-1}=\left( v+\varphi \right) ^{2n-1}-v^{2n-1}  \notag \\
& =v^{2n-1}+(2n-1)v^{2n-2}\varphi -v^{2n-1}  
=(2n-1)v^{2n-2}\varphi.  
\end{align}

Therefore, using (\ref{F_1}),(\ref{F_2}),(\ref{F_3}), and (\ref{F_4}), we have:

\begin{align}
     \varphi_{t}
     &= -\Delta^{2} \varphi +2 \Delta  \varphi  +\left(\|v\|^{2}_{\mathcal{V}}+\|v\|^{2}_{L^{2}}+  \| v \|_{L^{2n}}^{2n}\right) \varphi
+  \left( \langle 2v, \varphi \rangle_{\mathcal{V}} + \langle 2v, \varphi \rangle_{L^{2}}  + 2n \left\langle v^{2n-1}, \varphi \right\rangle \right) v\notag\\& - (2n-1)v^{2n-2}\varphi
\end{align}

We define the linear operator $L$ on $\mathcal{V}$ by

\begin{align}
    Lw & \equiv  -\Delta^{2} w +2 \Delta  w +\left(\|\varphi\|^{2}_{\mathcal{V}}+\|\varphi\|^{2}_{L^{2}}+  \| \varphi \|_{L^{2n}}^{2n}\right) w
+  \left( \langle 2\varphi, w \rangle_{\mathcal{V}} + \langle 2\varphi, w \rangle_{L^{2}}  + 2n \left\langle \varphi^{2n-1}, w \right\rangle \right) \varphi \notag\\& - (2n-1)\varphi^{2n-2}w=0 \notag \\
w|_{\Gamma } &=0
\end{align}

Clearly $L$ is a self-adjoint operator. From the  linear
elliptic boundary problems we can deduce that there is a  positive $\lambda $ such that $
\lambda I+L$ is invertible.Thus, we have Fredholm alternative result for
the problem,
\begin{eqnarray}
Lw &=&h,\text{ }x\in D  \label{Lj6} \\
w|_{\Gamma } &=&0.  \notag
\end{eqnarray}

Using Fredholm alternative, we can infer that  if $\ker L=\phi ,$ then for any $
h\in \mathcal{H},$ the problem (\ref{Lj6}) has a unique solution $w\in \mathcal{V}.$ If $
\ker L\neq \phi $ and have finite dimension so in this case problem (\ref
{Lj6}) have a solution iff $h\in \left( \ker L\right) ^{\bot }.$ Let
$\left\{ e_{j}\right\} _{j=1}^{n}$ is the normalized orthogonal basis of $\ker L$ in $\mathcal{H}$ and $\Pi $ be the projection from $\mathcal{H}$ onto $\ker L.$ Define an operator $\mathcal{L} :\mathcal{V}\rightarrow \mathcal{H}$ by
\begin{equation}
\mathcal{L} w=Lw+\Pi w  \label{Lj7}
\end{equation}

The operator $\mathcal{L}$ is a bijection. For $u= v+ \varphi$, we define operator $M: \mathcal{V} \rightarrow \mathcal{H}$ by:

\begin{align}{\label{M-0}}
    M(v)\equiv -\Delta^{2} u+2\Delta u+\|u\|^{2}_{\mathcal{V}}u -\|u\|^{2}_{L^{2}}u  +\| u\|^{2n}_{{L}^{2n}} u- u^{2n-1}
\end{align}

We argue that $DM(0)=L$. To do so, it is sufficient to prove $\frac{\left\vert M(h)-M(0)-L\right\vert }{
\left\vert h\right\vert _{\mathcal{H}}} \rightarrow 0$ as $\left\vert h\right\vert _{\mathcal{H}}\rightarrow 0$. Where $D$ is Frechet derivative.\\
From the definition of $M$, we have:

\begin{align*}
    M(0)= -\Delta^{2} \varphi +2\Delta  \varphi+\|   \varphi\|^{2}_{\mathcal{V}} ~ \varphi -\|     \varphi\|^{2}_{L^{2}} ~ \varphi  +\|  \varphi\|^{2n}_{{L}^{2n}}  \varphi-  \varphi^{2n-1}
\end{align*}

and 
\begin{align}{\label{M-h}}
    M(h) &= -\Delta^{2} (h+\varphi)+2\Delta (h+\varphi)+\|  h+\varphi\|^{2}_{\mathcal{V}} ~(h+\varphi) -\|    h+\varphi\|^{2}_{L^{2}} ~u  +\| h+\varphi\|^{2n}_{{L}^{2n}} (h+\varphi)- (h+\varphi)^{2n-1} \notag\\
    &=-\Delta^{2} (h+\varphi)+2\Delta (h+\varphi)+ \left( \|h \|^{2}_{\mathcal{V}}+ \|\varphi  \|^{2}_{\mathcal{V}}+ 2 \langle h, \varphi \rangle_{\mathcal{V}}\right)(h+\varphi) - \left( \|h \|^{2}_{L^{2}}+ \|\varphi  \|^{2}_{L^{2}}+ 2 \langle h, \varphi \rangle_{L^{2}}\right)(h+\varphi) \notag\\
    &~~+\left(\| \varphi\|^{2n}_{{L}^{2n}} +2n \langle\varphi^{2n-1}, h \rangle+...  \right) (h+\varphi) -\left(\varphi ^{2n-1}+(2n-1)\varphi ^{2n-2}h+...+h^{2n-1}\right)
\end{align}

Using (\ref{M-0}) and (\ref{M-h}), we have:

\begin{eqnarray*}
&&\left\vert
\begin{array}{c}
 M(h)-M(0) +\Delta^{2} h -2 \Delta  h -\left(\|\varphi\|^{2}_{\mathcal{V}}+\|\varphi\|^{2}_{L^{2}}+  
 \| \varphi \|_{L^{2n}}^{2n}\right) h \\
-  \left( \langle 2\varphi, h \rangle_{\mathcal{V}} + 
\langle 2\varphi, h \rangle_{L^{2}}  - 2n \left\langle \varphi^{2n-1}, h \right\rangle \right) \varphi - (2n-1)\varphi^{2n-2}h
\end{array}
\right\vert \\
&=&o\left( \left\vert h\right\vert _{\mathcal{H}}\right) \rightarrow 0\text{ as }
\left\vert h\right\vert _{\mathcal{H}}\rightarrow 0
\end{eqnarray*}

Therefore, we have $DM(0)=L$. Set
\begin{equation*}
Tv=M(v)+\Pi v.
\end{equation*}
Then
\begin{equation*}
DT\left( 0\right) =\mathcal{L}.
\end{equation*}

The bijection of  $\mathcal{L} $ implies that $DN\left( 0\right)$ is bijective too. So,  by
local inversion theorem, there is a  neighbourhood $\mathcal{O}_{\mathcal{V}}$ of origin in $\mathcal{V}$
and neighbourhood $\mathcal{O}_{\mathcal{H}}$ of origin in $\mathcal{H},$ together with with inverse $\Upsilon
:O_{\mathcal{H}}\rightarrow O_{\mathcal{V}}$ such that
\begin{eqnarray}
T\left( \Upsilon \left( g\right) \right) &=&g,\text{ for all }g\in \mathcal{O}_{\mathcal{H}},
\label{Lj10} \\
\text{and }\Upsilon \left( T\left( v\right) \right) &=&v,\text{ for all }v\in
\mathcal{O}_{\mathcal{V}}.  \label{Lj11}
\end{eqnarray}

As $F(u)=\|  u\|^{2}_{{H}^{2}_{0}} ~u + 2\|    u\|^{2}_{{H}^{1}_{0}} ~u  +\| u\|^{2n}_{{L}^{2n}} u- u^{2n-1}$ is analytic in $u$, so is $M$. Since $T=M +\Pi$ is analytic, so this implies that $\Upsilon$ is analytic too. In addition, there is a constant $K>0$ such that:

\begin{eqnarray}
\left\Vert \Upsilon \left( g_{1}\right) -\Upsilon\left( g_{2}\right) \right\Vert_{\mathcal{V}}
&\leq &K\left\vert g_{1}-g_{2}\right\vert _{\mathcal{H}},\text{ for all }
g_{1},g_{2}\in \mathcal{O}_{\mathcal{H}}  \label{Lj12} \\
\left\vert T\left( v_{1}\right) -T\left( v_{2}\right) \right\vert _{\mathcal{H}} &\leq
&K\left\Vert v_{1}-v_{2}\right\Vert_{\mathcal{V}} ,\text{ for all }v_{1},v_{2}\in \mathcal{O}_{\mathcal{V}}.
\label{Lj13}
\end{eqnarray}

Here, we take $K$ as an arbitrary constant. Assume that  $\xi =\left( \xi _{k}\right) _{k=1}^{d}\in\mathbb{R}^{d}$ and since $\Pi v\in \ker L$ so $\Pi v=\sum_{k=1}^{d}\xi _{k}e_{k}.$ We
can construct $\xi $ sufficiently small to have $\Pi v=\sum_{k=1}^{d}\xi
_{k}e_{k}\in \mathcal{O}_{\mathcal{H}}.$

Define $\Xi:\mathbb{R}^{d}\rightarrow\mathbb{R}$ as
\begin{equation*}
\Xi \left( \xi \right) =\mathcal{Y} \left( \Upsilon \left( \Pi v\right) +\varphi
\right) :=\mathcal{Y} \left( \gamma \left( v\right) \right) .
\end{equation*}

It is obvious that  $\Xi \left( \xi \right) $ is analytic in a small neighbourhood of origin in $\mathbb{R}^{d}.$ And for $v\in O_{\mathcal{V}}$, the Frechet derivative $DT\left( v\right):\mathcal{V}\rightarrow \mathcal{H}$ is continuous linear transformation. Similarly $g\in O_{\mathcal{H}}$, $D\Upsilon \left( g\right) :\mathcal{H}\rightarrow \mathcal{V}$ is also continuous linear
transformation. Differentiate $\Xi $ w.r.t $\xi _{k},$, it follows that:
\begin{equation}
\frac{\partial \Xi }{\partial \xi _{k}}=D\mathcal{Y} \left( \gamma \left(
v\right) \right) \circ \left( D\gamma \left( v\right) \right) =\left( D\mathcal{Y}
\left( \gamma \left( v\right) \right) \right) \left( D\gamma \left( v\right)
\right)  \label{Lj15}
\end{equation}

Using Frëchet derivative of $\Xi $  with $\alpha_{1}
=\gamma \left( v\right) $ and $\alpha_{2} =D\gamma \left( v\right) ,$ we can deduce that:

\begin{eqnarray*}
\frac{\partial \Xi }{\partial \xi _{k}} &=&\left( D\mathcal{Y} \left( \alpha_{1}
\right) \right) \left( \alpha_{2} \right) =\left\langle \Delta^2 \alpha_1 - 2\Delta \alpha_1 +\alpha_1 + \alpha_1^{2n-1},\alpha_{2} \right\rangle \\
&=&-\left\langle- \Delta^2 \alpha_1 +2\Delta \alpha_1  - \alpha_1^{2n-1},\alpha_{2} \right\rangle + \left\langle\alpha_1, \alpha_2\right\rangle\\
&=& - \left\langle -\Delta^{2} \alpha_{1} +2\Delta \alpha_{1} -\alpha_{1}
^{2n-1} +\left(\|  \alpha_{1}\|^{2}_{{H}^{2}_{0}}  + 2\|    \alpha_{1}\|^{2}_{{H}^{1}_{0}}   +\| \alpha_{1}\|^{2n}_{{L}^{2n}} \right) \alpha_{1},\alpha_{2} \right\rangle \\
& &+ \left(1+\|  \alpha_{1}\|^{2}_{{H}^{2}_{0}}  + 2\|    \alpha_{1}\|^{2}_{{H}^{1}_{0}}   +\| \alpha_{1}\|^{2n}_{{L}^{2n}} \right) \left \langle \alpha_{1}, \alpha_{2} \right\rangle \\
&=&  -\left\langle  \alpha'_{1},\alpha_{2} \right\rangle+ \left(1+\|  \alpha_{1}\|^{2}_{{H}^{2}_{0}}  + 2\|    \alpha_{1}\|^{2}_{{H}^{1}_{0}}   +\| \alpha_{1}\|^{2n}_{{L}^{2n}} \right) \left \langle \alpha_{1}, \alpha_{2} \right\rangle \\
&=& -\left\langle  \left(\Upsilon \left( \Pi v\right) +\varphi\right)',\alpha_{2} \right\rangle+ \left(1+\|  \alpha_{1}\|^{2}_{{H}^{2}_{0}}  + 2\|    \alpha_{1}\|^{2}_{{H}^{1}_{0}}   +\| \alpha_{1}\|^{2n}_{{L}^{2n}} \right) \left \langle \alpha_{1}, \alpha_{2} \right\rangle \\
&=&- \left\langle  M  \left(\Pi v\right) ,\alpha_{2} \right\rangle+ \left(1+\|  \alpha_{1}\|^{2}_{{H}^{2}_{0}}  + 2\|    \alpha_{1}\|^{2}_{{H}^{1}_{0}}   +\| \alpha_{1}\|^{2n}_{{L}^{2n}} \right) \left \langle \alpha_{1}, \alpha_{2} \right\rangle
\end{eqnarray*}

Applying the absolute value and the Cauchy–Schwarz inequality, it follows that:

\begin{align*}
    \left |{\frac{\partial \Xi }{\partial \xi _{k}}}\right| &= \left |\left\langle  M  \left(\Pi v\right) ,\alpha_{2} \right\rangle- \left(1+\|  \alpha_{1}\|^{2}_{{H}^{2}_{0}}  + 2\|    \alpha_{1}\|^{2}_{{H}^{1}_{0}}   +\| \alpha_{1}\|^{2n}_{{L}^{2n}} \right) \left \langle \alpha_{1}, \alpha_{2} \right\rangle\right|\\
    &\leq \left| \left\langle  M  \left(\Pi v\right) ,\alpha_{2} \right\rangle \right| + \left(1+\|  \alpha_{1}\|^{2}_{{H}^{2}_{0}}  + 2\|    \alpha_{1}\|^{2}_{{H}^{1}_{0}}   +\| \alpha_{1}\|^{2n}_{{L}^{2n}} \right)\left|\left \langle \alpha_{1}, \alpha_{2} \right\rangle\right|\\
    & \leq \left|M  \left(\Pi v\right)\right|_{\mathcal{H}}\left|\alpha_{2}\right|_{\mathcal{H}} +\left(1+\|  \alpha_{1}\|^{2}_{{H}^{2}_{0}}  + 2\|    \alpha_{1}\|^{2}_{{H}^{1}_{0}}   +\| \alpha_{1}\|^{2n}_{{L}^{2n}} \right) \left|\alpha_{1}\right|_{\mathcal{H}}\left|\alpha_{2}\right|_{\mathcal{H}} \\
    &= \left[ \left|M  \left(\Pi v\right)\right|_{\mathcal{H}} +\left(1+\|  \alpha_{1}\|^{2}_{{H}^{2}_{0}}  + 2\|    \alpha_{1}\|^{2}_{{H}^{1}_{0}}   +\| \alpha_{1}\|^{2n}_{{L}^{2n}} \right) \left|\alpha_{1}\right|_{\mathcal{H}} \right]\left|\alpha_{2}\right|_{\mathcal{H}}
\end{align*}

As $v\in O_{\mathcal{V}}$, $\Pi v\in O_{H\text{ }}$ and $\Upsilon\left( \Pi v\right) \in
O_{\mathcal{V}}$ so

\begin{eqnarray}
\left\vert \alpha_1 \right\vert _{\mathcal{H}} &=&\left\vert \Upsilon \left( \Pi v\right)
+\varphi \right\vert _{\mathcal{H}}\leq \left\vert \Upsilon \left( \Pi v\right)
\right\vert _{\mathcal{H}}+\left\vert \varphi \right\vert _{\mathcal{H}}  \notag \\
&\leq &k\left\Vert \Upsilon \left( \Pi v\right) \right\Vert_{\mathcal{V}} +\left\Vert \varphi
\right\Vert_{\mathcal{V}} :=C_{1}<\infty ,  \label{C1}
\end{eqnarray}

and,

\begin{eqnarray}
1+\left\Vert \alpha_1 \right\Vert^{2} _{H^{2}_{0}} &=&1+\left\Vert \Upsilon \left( \Pi v\right)
+\varphi \right\Vert^{2} _{H^{2}_{0}}\leq 1+\left\Vert \Upsilon \left( \Pi v\right)
\right\Vert ^{2}_{H^{2}_{0}}+\left\Vert \varphi \right\Vert^{2}_{H^{2}_{0}}  \notag \\
&\leq &1+k^2 \left(\left\Vert \Upsilon \left( \Pi v\right) \right\Vert_{\mathcal{V}}^{2} +\left\Vert \varphi
\right\Vert_{\mathcal{V}}^{2}\right) :=C_{2}<\infty ,  \label{C2}
\end{eqnarray}

Also,

\begin{eqnarray}
2\left\Vert \alpha_1 \right\Vert^{2} _{H^{1}_{0}} &=&2\left\Vert \Upsilon \left( \Pi v\right)
+\varphi \right\Vert^{2} _{H^{1}_{0}} \leq 2\left\Vert \Upsilon \left( \Pi v\right)
\right\Vert ^{2}_{H^{1}_{0}}+2\left\Vert \varphi \right\Vert^{2}_{H^{1}_{0}}  \notag \\
&\leq &2 k^2 \left(\left\Vert \Upsilon \left( \Pi v\right) \right\Vert_{\mathcal{V}}^{2} +\left\Vert \varphi
\right\Vert_{\mathcal{V}}^{2}\right) :=C_{3}<\infty ,  \label{C3}
\end{eqnarray}

Moreover,

\begin{eqnarray}
\left\vert \alpha_1 \right\vert _{{L}^{2n}}^{2n} &\leq &k^{2n}\left\Vert \alpha_1
\right\Vert ^{2n}=k^{2n}\left\Vert \Upsilon \left( \Pi v\right) +\varphi
\right\Vert_{\mathcal{V} }^{2n}\leq k^{2n}\left( \left\Vert \Upsilon \left( \Pi v\right) \right\Vert_{\mathcal{V} }
+\left\Vert \varphi \right\Vert_{\mathcal{V} } \right) ^{2n}:=C_{3}<\infty .  \label{C3}
\end{eqnarray}

Now, for $\left|\alpha_{2}\right|$, we have:

\begin{eqnarray*}
\alpha_{2} &=&D\gamma \left( v\right) =D\left( \Upsilon \left( \Pi v\right) +\varphi
\right) =D\left( \Upsilon \left( \Pi v\right) \right) =D\left( \Upsilon \left( \Pi v\right) \right) \circ \frac{\partial }{\partial
\xi _{k}}\left( \Pi v\right)=\left( D\Upsilon \left( \Pi v\right) \right) \left( e_{k}\right)
\end{eqnarray*}
Applying $\mathcal{H}-$norm on both sides
\begin{eqnarray}
\left\vert \alpha_{2} \right\vert _{\mathcal{H}} &=&\left\vert \left( D\Upsilon \left( \Pi
v\right) \right) \left( e_{k}\right) \right\vert _{\mathcal{H}}\leq \left\vert D\Upsilon \left( \Pi v\right) \right\vert _{L\left( H,V\right)
}\left\vert e_{k}\right\vert _{\mathcal{H}}=C_{4}<\infty.  \label{C4}
\end{eqnarray}

Thus, using inequalities (\ref{C1}), (\ref{C2}), (\ref{C3}), and (\ref{C4}) we have:

\begin{eqnarray}
\left\vert \frac{\partial \Xi }{\partial \xi _{k}}\right\vert &\leq &
\left[ \left\vert M\left( \Pi v\right) \right\vert _{\mathcal{H}}+\left(
C_{2}+C_{3}\right) C_{1}\right] C_{4}  \leq
C\left\vert M\left( \Pi v\right) \right\vert _{\mathcal{H}}.  \label{LK}
\end{eqnarray}

Now, $\varphi $ is equilibrium point so  from
equation (\ref{Lj15}) we can deduce that $\xi =0$ is critical point of $\Gamma \left( \xi
\right) .$ Using inequality (\ref{LK}), we have
\begin{eqnarray}
\left\vert \nabla \Xi \left( \xi \right) \right\vert &\leq &K\left\vert
M\left( \Upsilon \left( \Pi v\right) \right) \right\vert _{\mathcal{H}}=K\left\vert M\left( \Upsilon \left( \Pi v\right) \right)
-M(v)+M(v)\right\vert _{\mathcal{H}}  \notag \\
&\leq &K\left\vert M(v)\right\vert _{\mathcal{H}}+K\left\vert M\left( \Upsilon \left( \Pi
v\right) \right) -M(v)\right\vert _{\mathcal{H}}  \label{Lj16}
\end{eqnarray}

As $T(v)=M(v)+\Pi v$ then from inequality (\ref{Lj13}) it follows that:
\begin{eqnarray}
\left\vert M\left( v_{1}\right) -M\left( v_{2}\right) \right\vert _{\mathcal{H}} &\leq
&\left\vert \Pi \left( v_{1}\right) -\Pi \left( v_{2}\right) \right\vert
_{\mathcal{H}}+\left\vert T\left( v_{1}\right) -T\left( v_{2}\right) \right\vert _{\mathcal{H}}
\leq \left\vert v_{1}-v_{2}\right\vert _{\mathcal{H}}+K\left\Vert
v_{1}-v_{2}\right\Vert_{\mathcal{V}}  \notag \\
&\leq &K\left\Vert v_{1}-v_{2}\right\Vert_{\mathcal{V}} +K\left\Vert v_{1}-v_{2}\right\Vert_{\mathcal{V}}
\leq K\left\Vert v_{1}-v_{2}\right\Vert_{\mathcal{V}}  \label{Lj17}
\end{eqnarray}

Furthermore,
\begin{equation}
\left\Vert \Upsilon \left( \Pi v\right) -v\right\Vert =\left\Vert \Upsilon \left(
\Pi v\right) -\Upsilon \left( T(v)\right) \right\Vert \leq K\left\vert \Pi
v-T(v)\right\vert _{\mathcal{H}}=K\left\vert M(v)\right\vert _{\mathcal{H}}  \label{Lj18}
\end{equation}
Using inequalities (\ref{Lj16}),(\ref{Lj17}) and (\ref{Lj18}), it follows that:
\begin{eqnarray}
\left\vert \nabla \Xi \left( \xi \right) \right\vert &\leq &K\left\vert
M(v)\right\vert _{\mathcal{H}}+K\left\Vert \Upsilon \left( \Pi v\right) -v\right\Vert
\leq K\left\vert M(v)\right\vert _{\mathcal{H}}+K\left\vert M(v)\right\vert _{\mathcal{H}}
\leq K\left\vert M(v)\right\vert _{\mathcal{H}}  \label{Lj19}
\end{eqnarray}

For  $t\in \lbrack 0,1],$ and $v\in O_{\mathcal{V}},$ $v+t\left( \Upsilon \left( \Pi
v\right) -v\right) \in O_{\mathcal{V}}.$ If $u=v+\varphi ,$ then using inequality (\ref
{Lj17}) and (\ref{Lj18}) we can infer that
\begin{eqnarray*}
\left\vert \mathcal{Y} \left( u\right) -\Xi \left( \xi \right) \right\vert
&=&\left\vert \mathcal{Y} \left( u\right) -\mathcal{Y} \left( \Upsilon \left( \Pi v\right)
+\varphi \right) \right\vert=\left\vert \int_{0}^{1}\frac{d}{dt}\mathcal{Y} \left( tv+(1-t)\Upsilon \left( \Pi
v\right) +\varphi \right) dt\right\vert
\end{eqnarray*}

Now, calculate $\frac{d}{dt}\mathcal{Y} \left( tv+(1-t)\Upsilon \left( \Pi v\right)
+\varphi \right) $ in the following step by setting $\zeta :=tv+(1-t)\Upsilon \left( \Pi v\right) +\varphi $
then
\begin{eqnarray*}
\mathcal{Y} \left( tv+(1-t)\Upsilon \left( \Pi v\right) +\varphi \right) &=&\mathcal{Y} \left(
\zeta \right)  = \frac{1}{2}
\left\Vert   \zeta \right\Vert_{\mathcal{V}} ^{2} +\frac{1}{2n}
\left\Vert \zeta \right\Vert_{{L^{2n}}} ^{2n}\\
\end{eqnarray*}

As $\frac{d\zeta }{dt} :=v-\Upsilon \left( \Pi v\right)$. Taking derivative  w.r.t $t$, and using integration by parts, we have

\begin{eqnarray*}
\frac{d}{dt}\mathcal{Y} \left( \zeta \right) &=&\frac{1}{2}\frac{d}{dt}
\int_{D}\left\Vert  \Delta \zeta \right\Vert_{L^{2}(\mathcal{O})} ^{2}dx+ \frac{d}{dt}+\int_{D}\left\Vert  \nabla \zeta \right\Vert_{L^{2}(\mathcal{O})} ^{2}dx+\frac{1}{2}
\int_{D}\left\Vert   \zeta \right\Vert_{L^{2}(\mathcal{O})} ^{2}dx+\frac{1}{2n}\frac{d}{dt}
\int_{D}\left\Vert \zeta \right\Vert_{{L^{2n}}} ^{2n}dx\\
&=&\int_{D}   \Delta \zeta\cdot \Delta \frac{d\zeta }{dt}dx+2\int_{D}   \Delta \zeta\frac{d\zeta }{dt}dx+\int_{D}  \zeta\frac{d\zeta }{dt}dx + \int_{D}\zeta ^{2n-1}\frac{
d\zeta }{dt}dx \\  
&=&-\left\langle \Delta^{2} \zeta +2 \Delta \zeta+\zeta - \zeta ^{2n-1},\frac{d\zeta }{dt}\right\rangle \\
&=&- \left\langle -\Delta^{2} \zeta + 2\Delta \zeta - \zeta^{2n-1} + \left( \| \zeta \|^{2}_{H^{2}_{0}} + 2 \| \zeta \|^{2}_{H^{1}_{0}} + \| \zeta \|^{2n}_{L^{2n}} \right) \zeta, \frac{d\zeta}{dt} \right\rangle \\
& & + \left(1+  \| \zeta \|^{2}_{H^{2}_{0}} + 2 \| \zeta \|^{2}_{H^{1}_{0}} + \| \zeta \|^{2n}_{L^{2n}} \right) \left\langle \zeta, \frac{d\zeta}{dt} \right\rangle
\\
\left\vert \frac{d}{dt}\mathcal{Y} \left( \zeta \right) \right\vert  &\leq& \left\vert \left\langle -\Delta^{2} \zeta + 2\Delta \zeta - \zeta^{2n-1} + \left( \| \zeta \|^{2}_{H^{2}_{0}} + 2 \| \zeta \|^{2}_{H^{1}_{0}} + \| \zeta \|^{2n}_{L^{2n}} \right) \zeta, \frac{d\zeta}{dt} \right\rangle \right\vert \\
& & +  \left(1+  \| \zeta \|^{2}_{H^{2}_{0}} + 2 \| \zeta \|^{2}_{H^{1}_{0}} + \| \zeta \|^{2n}_{L^{2n}}+\|\zeta\|_{L^{2}}^{2} \right) \left\vert \left\langle \zeta, \frac{d\zeta}{dt} \right\rangle \right\vert
\\
&=& \left\vert \left\langle -\Delta^{2} \zeta + 2\Delta \zeta - \zeta^{2n-1} + \left( \| \zeta \|^{2}_{H^{2}_{0}} + 2 \| \zeta \|^{2}_{H^{1}_{0}} + \| \zeta \|^{2n}_{L^{2n}} \right) \zeta, \frac{d\zeta}{dt} \right\rangle \right\vert + \left(1+  \| \zeta \|^{2}_{\mathcal{V}}  + \| \zeta \|^{2n}_{L^{2n}} \right) \left\vert\left\langle \zeta, \frac{d\zeta}{dt} \right\rangle\right\vert
 \end{eqnarray*}

Now, we will write the expression $ 1+  \| \zeta \|^{2}_{\mathcal{V}}  + \| \zeta \|^{2n}_{L^{2n}}  $ interms of energy functional $\mathcal{Y}(\zeta).$\\

From  the definition of energy functional $\mathcal{Y}(\zeta)$, we get

\begin{align*}
    \| \zeta \|_{\mathcal{V}}^{2} = 2 \left( \mathcal{Y}(\zeta) - \frac{1}{2n} \| \zeta \|_{L^{2n}}^{2n} \right), \quad and \quad  \quad \| \zeta \|_{L^{2n}}^{2n} \leq 2n \mathcal{Y}(\zeta)
\end{align*}
And,
\begin{align*}
    1 + \| \zeta \|_{\mathcal{V}}^{2} + \| \zeta \|_{L^{2n}}^{2n} &= 1 + 2 \left( \mathcal{Y}(\zeta) - \frac{1}{2n} \| \zeta \|_{L^{2n}}^{2n} \right) + \| \zeta \|_{L^{2n}}^{2n} =1 + 2 \mathcal{Y}(\zeta) + \left(1 - \frac{1}{n}\right) \| \zeta \|_{L^{2n}}^{2n} \\
    &\leq  1 + 2n \mathcal{Y}(\zeta), \qquad   n > \frac{1}{2} 
    \end{align*}

Therefore,
\begin{eqnarray*}
\left\vert \frac{d}{dt}\mathcal{Y} \left( \zeta \right) \right\vert &\leq
&  \left\vert \left\langle M\left( tv+(1-t)\Upsilon \left( \Pi v\right) \right) ,v-\Pi
v\right\rangle \right\vert +\left(1+2n\mathcal{Y} \left( \zeta \right)\right)\left\vert \left\langle \zeta ,v-\Upsilon \left(
\Pi v\right) \right\rangle\right\vert\\
&\leq & \left\vert M\left( tv+(1-t)\Upsilon \left( \Pi v\right) \right) \right\vert
_{\mathcal{H}}\left\vert v-\Pi v\right\vert _{\mathcal{H}}+\left(1+2n\mathcal{Y} \left( \zeta \right)\right)  \left\vert
\zeta \right\vert _{\mathcal{H}}\left\vert v-\Upsilon \left( \Pi v\right) \right\vert _{\mathcal{H}}
\\
&=&\left( \left\vert M\left( \left( tv+\Upsilon \left( \Pi v\right) \right)
-t\Upsilon \left( \Pi v\right) \right) \right\vert _{\mathcal{H}}+\left(1+2n\mathcal{Y} \left( \zeta \right)\right)
\left\vert \zeta \right\vert _{\mathcal{H}}\right) \left\vert v-\Upsilon \left( \Pi
v\right) \right\vert _{\mathcal{H}} \\
&\leq &\left( \left\vert tv+\Upsilon \left( \Pi v\right) -t\Upsilon \left( \Pi
v\right) \right\vert _{\mathcal{E}}+\left(1+2n\mathcal{Y} \left( \zeta \right)\right) \left\vert \zeta
\right\vert _{\mathcal{H}}\right) \left\vert v-\Upsilon \left( \Pi v\right) \right\vert
_{\mathcal{H}} \\
&\leq &\left( \left\vert \Upsilon \left( \Pi v\right) +t(v-\Psi \left( \Pi
v\right) )\right\vert _{\mathcal{E}}+\left(1+2n\mathcal{Y} \left( \zeta \right)\right) \left\vert \zeta
\right\vert _{\mathcal{H}}\right) \left\vert v-\Upsilon \left( \Pi v\right) \right\vert
_{\mathcal{H}} \\
\int_{0}^{1}\left\vert \frac{d}{dt}\mathcal{Y} \left( \zeta \right) \right\vert dt
&\leq &\int_{0}^{1}\left( \left\vert \Upsilon \left( \Pi v\right) \right\vert
_{\mathcal{E}}+t\left\vert (v-\Upsilon \left( \Pi v\right) )\right\vert _{\mathcal{E}}+\left(1+2n\mathcal{Y} \left( \zeta \right)\right) \left\vert \zeta \right\vert _{\mathcal{H}}\right) \left\vert v-\Psi
\left( \Pi v\right) \right\vert _{\mathcal{H}}dt \\
&=&\left( \left\vert \Upsilon \left( \Pi v\right) \right\vert _{\mathcal{E}}+\left\vert
(v-\Upsilon \left( \Pi v\right) )\right\vert _{\mathcal{E}}+\underset{0\leq t\leq 1}{\max }
\left(1+2n\mathcal{Y} \left( \zeta \right)\right) \left\vert \eta \right\vert _{\mathcal{H}}\right) \left\vert
v-\Upsilon \left( \Pi v\right) \right\vert _{\mathcal{H}}
\end{eqnarray*}

Now, it is obvious that the first and third term of the above inequality are bounded by some constant $K'>0$. It follows that:

\begin{equation*}
\left\vert \mathcal{Y} \left( u\right) -\Xi \left( \xi \right) \right\vert \leq
\left( \left\vert (v-\Upsilon \left( \Pi v\right) )\right\vert _{\mathcal{E}}+K^{\prime
}\right) \left\vert v-\Upsilon \left( \Pi v\right) \right\vert _{\mathcal{H}}
\end{equation*}
From the continuous embedding $\mathcal{E}\hookrightarrow H$ and 
$\left\vert v-\Upsilon \left( \Pi v\right)
\right\vert _{\mathcal{E}}\leq K\left\vert M\left( v\right) \right\vert _{\mathcal{H}}$, we can infer that
\begin{eqnarray}
\left\vert \mathcal{Y} \left( u\right) -\Xi\left( \xi \right) \right\vert \leq
\left( K\left\vert M\left( v\right) \right\vert _{\mathcal{H}}+K^{\prime }\right)
K\left\vert M\left( v\right) \right\vert _{\mathcal{H}}\leq K\left\vert M\left( v\right) \right\vert _{\mathcal{H}}^{2}+K^{\prime
}K\left\vert M\left( v\right) \right\vert _{\mathcal{H}}\leq K\left\vert M\left( v\right) \right\vert _{\mathcal{H}}^{2}.  \label{Lj20}
\end{eqnarray}

By the Lojasiewicz inequality, there exists a small constant $\sigma >0$ and $
\theta \in \left( 0,\frac{1}{2}\right) $ such that
\begin{equation}
\left\vert \nabla \Xi \left( \xi \right) \right\vert \geq \left\vert
\Xi \left( \xi \right) -\Xi \left( 0\right) \right\vert ^{1-\theta }.
\label{Lj21}
\end{equation}
Hence, we can choose $\sigma $ sufficiently small so that $\left\vert \xi
\right\vert \leq \sigma ,\Pi v=\sum_{j=1}^{d}\xi _{k}e_{k}\in O_{\mathcal{H}}.$ Using
definition of $\Xi$ and last inequality, we get
\begin{equation}
\left\vert \nabla \Xi \left( \xi \right) \right\vert \geq \left\vert
\Xi \left( \xi \right) -\Xi \left( 0\right) \right\vert ^{1-\theta
}=\left\vert \Xi \left( \xi \right) -\mathcal{Y} (\varphi )\right\vert
^{1-\theta }.  \label{Lj22}
\end{equation}
Recall the follwoing inequality for real numbers $p$ and $q$
\begin{equation*}
\left\vert p+q\right\vert ^{1-\theta }\leq 2\left\vert p\right\vert
^{1-\theta }+\left\vert q\right\vert ^{1-\theta }
\end{equation*}
Applying this inequality by choosing  $p=\Xi \left( \xi \right) -\mathcal{Y} (\varphi )$ and $
q=\mathcal{Y} (u)-\Xi \left( \xi \right) ,$ along with inequalities (\ref{Lj19}), (
\ref{Lj20}) and (\ref{Lj22}), we have
\begin{eqnarray*}
K\left\vert M(v)\right\vert _{\mathcal{H}} &\geq &\left\vert \nabla \Xi \left( \xi
\right) \right\vert \geq \left\vert \Xi \left( \xi \right) -\mathcal{Y} (\varphi )\right\vert
^{1-\theta } \geq \frac{1}{2}\left\vert \Xi \left( \xi \right) -\mathcal{Y} (\varphi )+\mathcal{Y}
(u)-\Xi \left( \xi \right) \right\vert ^{1-\theta } \\
&&-\frac{1}{2}\left\vert \mathcal{Y} (u)-\Xi \left( \xi \right) \right\vert
^{1-\theta }=\frac{1}{2}\left\vert \mathcal{Y} (u)-\mathcal{Y} (\varphi )\right\vert ^{1-\theta }-
\frac{1}{2}K^{1-\theta }\left\vert M(v)\right\vert _{\mathcal{H}}^{2(1-\theta )}.
\end{eqnarray*}
Since $\theta <\frac{1}{2},$ so $2\left( 1-\theta \right) >1.$ Then using
last inequality we get
\begin{equation}
\left\vert M(v)\right\vert _{\mathcal{H}}\geq K\left\vert \mathcal{Y} (u)-\mathcal{Y} (\varphi
)\right\vert ^{1-\theta }  \label{Lj23}
\end{equation}

Assume that $\varepsilon >0$ is sufficiently small. Choose $\sigma $ so that
$\left\Vert v\right\Vert \leq \sigma $ and
\begin{equation*}
K\left\vert \mathcal{Y} (u)-\mathcal{Y} (\varphi )\right\vert ^{-\varepsilon }\geq 1
\end{equation*}
Using last inequality with inequality (\ref{Lj23}) and  $   M(v)= -\Delta^{2} u+2\Delta u+\|u\|^{2}_{\mathcal{V}}u -\|u\|^{2}_{L^{2}}u  +\| u\|^{2n}_{{L}^{2n}} u- u^{2n-1}:\mathcal{V}\rightarrow \mathcal{H},$ we can infer that
\begin{equation*}
\left\vert    -\Delta^{2} u+2\Delta u+\|u\|^{2}_{\mathcal{V}}u -\|u\|^{2}_{L^{2}}u  +\| u\|^{2n}_{{L}^{2n}} u- u^{2n-1}\right\vert _{\mathcal{H}}\geq \left\vert \mathcal{Y} (u)-\mathcal{Y} (\varphi
)\right\vert ^{1-\theta ^{\prime }}
\end{equation*}
where $0<\theta ^{\prime }=\theta -\varepsilon <\frac{1}{2}.$ This is the end of the proof.
\end{proof}

In the next theorem, we will prove the convergence of the solution to the problem (\ref{PB}) to an equilibrium $\Omega(u_{0})$.

\begin{theorem}\label{3.3}
If the initial data $u_{0}$ is in $ \mathcal{V}\cap\mathcal{M},$ then the unique solution of our main problem \eqref{PB} has a limit $u^{\infty } \in \mathcal{V}$ as time $t\rightarrow \infty .$
\end{theorem}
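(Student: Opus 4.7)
The plan is to adapt the Simon--Jendoubi trajectory-integration scheme: combine the monotonicity of the energy $\mathcal{Y}$ along the flow, the $\mathcal{V}$-precompactness of the orbit, and the Lojasiewicz--Simon inequality of Theorem~\ref{Lwoj} to show that $\int_0^\infty |u_t(s)|_\mathcal{H}\, ds < \infty$, from which convergence will follow. First, I would observe that the energy identity (\ref{der}) together with the a priori bound from Theorem~\ref{global_sol_thm} implies that $t \mapsto \mathcal{Y}(u(t))$ is non-increasing and bounded, hence admits a limit $\mathcal{Y}_\infty$. Corollary~\ref{Coro-PreCompact} gives that $\Omega(u_0)$ is nonempty and compact in $\mathcal{V}$, and continuity of $\mathcal{Y}$ on $\mathcal{V}$ forces $\mathcal{Y} \equiv \mathcal{Y}_\infty$ on $\Omega(u_0)$. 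Invoking the semiflow invariance from Corollary~\ref{2.2} together with the strict dissipation in (\ref{der}) then forces every $\varphi \in \Omega(u_0)$ to be a stationary solution of (\ref{PB}), i.e.\ $\varphi \in \mathcal{S}$, which is the hypothesis needed in order to apply Theorem~\ref{Lwoj}.

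Next, I would apply Theorem~\ref{Lwoj} at each $\varphi \in \Omega(u_0)$, producing local data $(\sigma_\varphi, \theta_\varphi)$. Since $\Omega(u_0)$ is compact in $\mathcal{V}$, a finite subcover by the corresponding open $\mathcal{V}$-balls yields uniform constants $\sigma > 0$ and $\theta \in (0, \tfrac{1}{2})$ such that on the neighborhood
\begin{equation*}
U := \Bigl\{ u \in \mathcal{V} : \inf_{\varphi \in \Omega(u_0)} \|u - \varphi\|_\mathcal{V} < \sigma \Bigr\}
\end{equation*}
one has
\begin{equation*}
|u_t(t)|_\mathcal{H} \;\geq\; |\mathcal{Y}(u(t)) - \mathcal{Y}_\infty|^{1-\theta}.
\end{equation*}
Here I am using that the left-hand side of (\ref{LJ}) coincides with $|u_t|_\mathcal{H}$ by the equation in (\ref{PB}), and that $\mathcal{Y}(\varphi) = \mathcal{Y}_\infty$ for every $\varphi \in \Omega(u_0)$. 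Precompactness of the orbit in $\mathcal{V}$ (Theorem~\ref{Pre_compact}) gives $\inf_{\varphi \in \Omega(u_0)} \|u(t) - \varphi\|_\mathcal{V} \to 0$, so there exists $T$ with $u(t) \in U$ for all $t \geq T$.

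Finally, I would introduce $G(t) := (\mathcal{Y}(u(t)) - \mathcal{Y}_\infty)^{\theta}$, well-defined since the monotone convergence $\mathcal{Y}(u(t)) \downarrow \mathcal{Y}_\infty$ keeps the base nonnegative. Differentiating and using both (\ref{der}) and the Lojasiewicz--Simon bound above gives
\begin{equation*}
-\frac{d}{dt} G(t) \;=\; \theta\, (\mathcal{Y}(u(t)) - \mathcal{Y}_\infty)^{\theta-1}\, |u_t|_\mathcal{H}^2 \;\geq\; \theta\, |u_t|_\mathcal{H}
\end{equation*}
on $[T, \infty)$, so integration yields $\int_T^\infty |u_p(p)|_\mathcal{H}\, dp \leq \theta^{-1} G(T) < \infty$. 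Consequently $u(\cdot)$ is Cauchy in $\mathcal{H}$ and converges to some $u^\infty \in \mathcal{H}$; the degenerate case $\mathcal{Y}(u(t_0)) = \mathcal{Y}_\infty$ at a finite time $t_0$ is immediate from (\ref{der}), which forces $u_t \equiv 0$ for $t \geq t_0$. To upgrade this to $\mathcal{V}$-convergence, I invoke the $\mathcal{V}$-precompactness of the orbit once more: every subsequence $\{u(t_n)\}$ has a further subsequence convergent in $\mathcal{V}$, necessarily to $u^\infty$ by uniqueness of the $\mathcal{H}$-limit, so the full trajectory converges to $u^\infty$ in $\mathcal{V}$. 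The principal obstacle is the uniform choice of $\theta$ and $\sigma$ over the compact set $\Omega(u_0)$, together with the verification that every $\omega$-limit point is a critical point in $\mathcal{S}$; once these are secured, the remainder is the now-classical Lojasiewicz trajectory estimate.
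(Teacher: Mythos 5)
Your proposal is correct and follows essentially the same route as the paper's own (first) proof: monotone energy plus precompactness, a finite subcover of the compact set $\Omega(u_0)$ to get uniform Lojasiewicz constants, the differential inequality for $(\mathcal{Y}(u(t))-\mathcal{Y}_\infty)^\theta$ yielding $\int |u_t|_{\mathcal{H}}\,dt<\infty$, and an upgrade from $\mathcal{H}$- to $\mathcal{V}$-convergence via precompactness. If anything, you are more explicit than the paper about verifying that $\omega$-limit points are stationary and about the degenerate case where the energy reaches its limit in finite time.
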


\begin{proof}

From the theorem (\ref{der}), we have

\begin{eqnarray}
\frac{d}{dt}\left( \mathcal{Y} (u\right) ) =-\left\vert u_{t}^{}\right\vert _{\mathcal{H}}^{2}.
\end{eqnarray}

Also, 

\begin{equation}
\frac{d}{dt}\left( \mathcal{Y} (u(t)\right) ^{\theta }=-\frac{\theta \left\vert
u_{t}^{}\right\vert _{\mathcal{H}}^{2}}{\mathcal{Y} (u)^{1-\theta }}=-\frac{\theta
\left\vert -\Delta^{2} u+2\Delta u+\|u\|^{2}_{\mathcal{V}}u -\|u\|^{2}_{L^{2}}u  +\| u\|^{2n}_{{L}^{2n}} u- u^{2n-1}\right\vert
_{\mathcal{H}}^{2}}{\mathcal{Y} (u)^{1-\theta }}  \label{a}
\end{equation}

Using theorem \ref{Pre_compact}, $\left\{ u(t),t\geq 1\right\} $ compactly lies in $
\mathcal{V}$ so by Theorem 4.3.3 in \cite{Songu}, we can infer that  $\Omega (u_{0})$ is non-empty,
compact, invariant, connected and $dist_{\mathcal{V}}\left( u(t),\Omega (u_{0})\right)
\rightarrow 0$ in norm of $\mathcal{V}.$ Further for all $ x\in \Omega (u_{0})$ we
have $\Upsilon (x)=\,$constant$\,=e_{0}$ and hence
\begin{equation*}
\frac{d\mathcal{Y} (x)}{dt}=0,\text{ for all }x\in \Omega (u_{0}).
\end{equation*}

Moreover, using Theorem \ref{Lwoj}, for all $x\in \Omega (u_{0})$ there is a $\sigma _{0,x}>0$ and $ 0 <\theta _{x} <\frac{1}{2}  $
such that
\begin{eqnarray}
\left\vert  -\Delta^{2} u+2\Delta u+\|u\|^{2}_{\mathcal{V}}u -\|u\|^{2}_{L^{2}}u  +\| u\|^{2n}_{{L}^{2n}} u- u^{2n-1}\right\vert _{\mathcal{H}}
&\geq &\left\vert \mathcal{Y} (u)-\mathcal{Y} (x)\right\vert ^{1-\theta _{x}}\text{ }
\label{ut} \\
\text{for all }u\text{ such that }\left\Vert u-x\right\Vert &<&\sigma _{0,x},
\text{ i.e. for all }u\in B_{\mathcal{V}}\left( x,\sigma _{0,x}\right) .  \notag
\end{eqnarray}

As $\left\{ B_{\mathcal{V}}\left( x,\sigma _{0,x}\right) ,x\in \omega (u_{0})\right\}
$ is an open covering of $\Omega (u_{0})$, and we know that $\Omega (u_{0})$ is
compact so there is $n\in\mathbb{N}$ such that
\begin{equation*}
\left\{ B_{\mathcal{V}}\left( x_{k},\sigma _{0,x_{k}}\right) ,k=1,2,...,n\right\}
\subset \left\{ B_{\mathcal{V}}\left( x,\sigma _{0,x}\right) ,x\in \omega
(u_{0})\right\}
\end{equation*}
is a finite open subcover of $\Omega (u_{0}).$ Set $\Gamma =\cup
_{k=1}^{n}B_{\mathcal{V}}\left( x_k,\sigma _{0,x_k}\right) ,$ indeed $\Omega
(u_{0})\subset \Gamma .$ As $dist_{\mathcal{V}}\left( u(t),\Omega (u_{0})\right)
\rightarrow 0$ so we can determine $t_{0}>0$ such that $\ $for $t>t_{0},$ $
u(t)\in \Gamma .$ Further, we can also determine an increasing sequence $\left\{
t_{k}\right\} _{k\in\mathbb{N}}$, with $t_{1}>t,$ such that for all $t\in \lbrack t_{k},t_{k+1}]$ we have $
u(t)\in B_{\mathcal{V}}\left( x_{i},\sigma _{0,x_{i}}\right) $ for some $i\in\mathbb{N}.$ Set $\theta =\min \left\{ \theta _{x_{1}},\theta _{x_{2}},...,\theta
_{x_{n}}\right\} $. Since $x_{k}\in \Omega (u_{0})$ so $\mathcal{Y} (x_{k})=e_{0}$.

Using inequality \ref{ut}, we get
\begin{equation*}
\left\vert -\Delta^{2} u+2\Delta u+\|u\|^{2}_{\mathcal{V}}u -\|u\|^{2}_{L^{2}}u  +\| u\|^{2n}_{{L}^{2n}} u- u^{2n-1}\right\vert
_{\mathcal{H}}\geq \left\vert \mathcal{Y} (u)-e_{0}\right\vert ^{1-\theta _{x}},\text{ for
all }u\in \Gamma .
\end{equation*}
Using inequality \ref{a}), we have
\begin{eqnarray*}
-\frac{d}{dt}(\mathcal{Y} (u(t))-e_{0})^{\theta _{x}} &=&-\frac{\theta _{x}}{(\mathcal{Y} 
(u(t))-e_{0})^{1-\theta _{x}}}\frac{d}{dt}\left( \mathcal{Y} (u\right) )=\frac{\theta _{x}\left\vert u_t\left( t\right) \right\vert
_{\mathcal{H}}^{2}}{(\mathcal{Y}  (u(t))-e_{0})^{1-\theta _{x}}} \\
&=&\frac{\theta _{x}\left\vert -\Delta^{2} u+2\Delta u+\|u\|^{2}_{\mathcal{V}}u -\|u\|^{2}_{L^{2}}u  +\| u\|^{2n}_{{L}^{2n}} u- u^{2n-1}\right\vert
_{\mathcal{H}}^{2}}{(\mathcal{Y}  (u(t))-e_{0})^{1-\theta _{x}}} \\
&\geq &\frac{\theta _{x}\left( (\mathcal{Y}  (u(t))-e_{0})^{1-\theta _{x}}\right)
^{2}}{(\mathcal{Y}  (u(t))-e_{0})^{1-\theta _{x}}}\geq \theta _{x}\left\vert u_{t}\left( t\right) \right\vert _{\mathcal{H}}^{}.
\end{eqnarray*}

By taking integration on both sides, we get
\begin{eqnarray*}
\theta _{x}\int_{t_{k}}^{t_{k+1}}\left\vert u_{p}\right\vert _{\mathcal{H}}dp &\leq
&-\mathcal{Y} (u(t_{k+1})-e_{0})^{\theta _{x}}+\mathcal{Y} (u(t_{k})-e_{0})^{\theta _{x}}
=-(\mathcal{Y} (u(t_{k+1})-e_{0})^{\theta _{x}}+(\mathcal{Y} (u(t_{k})-e_{0})^{\theta
_{x}}
\end{eqnarray*}
and so
\begin{eqnarray*}
\int_{t_{1}}^{\infty }\left\vert u_{p}\right\vert _{\mathcal{H}}dp
&=&\sum_{k=1}^{\infty }\int_{t_{k}}^{t_{k+1}}\left\vert u_{p}\right\vert
_{\mathcal{H}}dp\leq \theta _{x}^{-1}\sum_{k=1}^{\infty }\left[ (\mathcal{Y}
(u(t_{k})-e_{0})^{\theta _{x}}-(\mathcal{Y} (u(t_{k+1})-e_{0})^{\theta _{x}}\right]
=\theta _{x}^{-1}\left( \mathcal{Y} \left( u\left( t_{1}\right) \right)
-e_{0}\right) ^{\theta _{x}}\\
\left\vert u(s)-u(t)\right\vert _{\mathcal{H}}&\leq& \int_{s}^{t}\left\vert
u_{p}\right\vert _{\mathcal{H}}dp \leq \theta _{x}^{-1}\left( \mathcal{Y} \left( u\left(
s\right) \right) -e_{0}\right) ^{\theta _{x}}.
\end{eqnarray*}
proceeding limit $t\rightarrow \infty $ we infer that
\begin{equation*}
\underset{t\rightarrow \infty }{\lim }u(t)=u^{\infty }\text{ in }\mathcal{L}^{2}.
\end{equation*}
But $dist_{\mathcal{V}}\left( u(t),\Omega (u_{0})\right) \rightarrow 0$ in $\mathcal{V}$, thus this convergence is also true in $\mathcal{V}.$
\end{proof}

Now, we demonstrate another proof of the same Theorem \ref{3.3} using the approach present in \cite{Jandoubi}, introduced by Jendoubi. This method indicates that orbit will utimately reach a point close to stationary solution $\varphi \in \Omega(u_0)$ and remain in that vicinity indefinitely, based on observations.

\begin{proof}

Assume that  $u_{0}\in \mathcal{V}\cap\mathcal{M}$ and $u$ be the global unique solution of our main
problem \ref{PB},with initial data $u_{0}.$ The $\Omega $-limit set mentioned  in Corollary \ref{Coro-PreCompact} can be alternatively written as
\begin{equation*}
\Omega \left( u_{0}\right) =\left\{ \mu \in \mathcal{V}:\text{ there exists }
t_{n}\rightarrow \infty ,\underset{n\rightarrow \infty }{\lim }\left\vert
u(t_{n})-\mu \right\vert _{\mathcal{V}}=0\right\} .\text{ }
\end{equation*}
Suppose $\mu \in \Omega \left( u_{0}\right) $, then there is $t_{n}\rightarrow \infty$ ,
\begin{equation}
\underset{n\rightarrow \infty }{
\lim }\left\vert u(t_{n})-\mu \right\vert _{\mathcal{V}}=0 . \label{a2}
\end{equation}
From our main the problem, we have
\begin{eqnarray*}
\left\vert \frac{du}{dt}\right\vert _{\mathcal{H}}^{2} &=&\left\vert \frac{du}{dt}\right\vert _{\mathcal{H}}\left\vert  -\Delta^{2} u+2\Delta u+\|u\|^{2}_{\mathcal{V}}u -\|u\|^{2}_{L^{2}}u  +\| u\|^{2n}_{{L}^{2n}} u- u^{2n-1}\right\vert _{\mathcal{H}}.
\end{eqnarray*}
And using  Theorem \ref{global_sol_thm},
\begin{equation*}
\left\vert \frac{du}{dt}\right\vert _{\mathcal{H}}^{2}=-\frac{d}{dt}\mathcal{Y} (u).
\end{equation*}

As for each $\varphi \in \Omega \left( u_{0}\right) ,$ $\mathcal{Y}
\left( \varphi \right) $ is constant. From last two equations, we have
\begin{align}
    -\frac{d}{dt}\left[  \mathcal{Y} (u\left( t\right)  )-\mathcal{Y} \left(
\varphi \right) \right] &=-\frac{d}{dt}\left( \mathcal{Y} (u\right) ) \notag \\
&= \left\vert
\frac{du}{dt}\right\vert _{\mathcal{H}}\left\vert -\Delta^{2} u+2\Delta u+\|u\|^{2}_{\mathcal{V}}u -\|u\|^{2}_{L^{2}}u  +\| u\|^{2n}_{{L}^{2n}} u- u^{2n-1}\right\vert _{\mathcal{H}}.  \label{a3}
\end{align}

Now, the function $\left[  \mathcal{Y} (u\left( t_{n}\right)  )-\mathcal{Y} \left(
\mu \right) \right] $ is a scalar function so for any $0<\theta <\frac{1}{2} $ and from equations \eqref{a3} and \eqref
{a2}, we can also derive that , for all $\varepsilon >0,$ there exists $N>0$
such that when $n\geq \mathcal{N}$
\begin{equation}
\left\vert u\left( t_{n}\right) -\mu \right\vert _{\mathcal{V}}<\frac{\varepsilon
}{2}\text{ and }\frac{1}{\theta }\left\vert \left( \mathcal{Y} (u(t_{n}))-\mathcal{Y}
\left( \mu \right) \right) \right\vert ^{\theta }<\frac{\varepsilon }{2},
\text{ for }n\geq \mathcal{N}. \label{a4}
\end{equation}

where $\sigma ,$and $\theta $ are explained in Theorem \ref{Lwoj} and
we select $\mathcal{N}$ sufficiently large so that $\left\vert u\left( t_{n}\right)
-\mu \right\vert _{\mathcal{H}}<\frac{\varepsilon }{2}.$ Define
\begin{equation*}
\tau =\sup \left\{ t\geq t_{\mathcal{N}}:\left\vert u\left( p\right) -\mu
\right\vert _{\mathcal{V}}<\sigma ,\text{ for all }p\in \left[ t_{\mathcal{N}},t\right] \right\}
.
\end{equation*}

Now, we calim that $\tau =\infty .$ Assume on contrary basis that $\tau <\infty$ and
\begin{equation*}
-\frac{d}{dt}\left[ \mathcal{Y} (u(t))-\mathcal{Y} \left( \mu \right) \right] ^{\theta
}=-\theta \frac{d}{dt}\left[ \mathcal{Y} (u(t))-\mathcal{Y} \left( \mu \right) \right]
\frac{d}{dt}\left[ \mathcal{Y} (u(t))-\mathcal{Y} \left( \mu \right) \right] ^{\theta
-1}.
\end{equation*}
Using  equation (\ref{a3}),
\begin{equation*}
-\frac{d}{dt}\left[ \mathcal{Y} (u(t))-\mathcal{Y} \left( \mu \right) \right] ^{\theta
}=-\theta \left\vert \frac{du}{dt}\right\vert _{\mathcal{H}}\left\vert -\Delta^{2} u+2\Delta u+\|u\|^{2}_{\mathcal{V}}u -\|u\|^{2}_{L^{2}}u  +\| u\|^{2n}_{{L}^{2n}} u- u^{2n-1}\right\vert _{\mathcal{H}}\left[
\mathcal{Y} (u(t))-\mathcal{Y} \left( \mu \right) \right] ^{\theta -1}.
\end{equation*}
Using Theorem \ref{Lwoj},
\begin{eqnarray*}
-\frac{d}{dt}\left[ \mathcal{Y} (u(t))-\mathcal{Y} \left( \mu \right) \right] ^{\theta
} &\geq &\theta \left\vert \frac{du}{dt}\right\vert _{\mathcal{H}}\left\vert \mathcal{Y}
(u(t))-\mathcal{Y} \left( \mu \right) \right\vert ^{1-\theta }\left[ \mathcal{Y}
(u(t))-\mathcal{Y} \left( \mu \right) \right] ^{\theta -1} \\
&\geq &\theta \left\vert \frac{du}{dt}\right\vert _{\mathcal{H}}\left[ \mathcal{Y}
(u(t))-\mathcal{Y} \left( \mu \right) \right] ^{1-\theta }\left[ \mathcal{Y}
(u(t))-\mathcal{Y} \left( \mu \right) \right] ^{\theta -1}.
\end{eqnarray*}
Taking Integration on both sides  on $\left( t_{\mathcal{N}},\tau \right) $ w.r.t $t,$
\begin{eqnarray}
\int_{t_{\mathcal{N}}}^{\tau }\left\vert \frac{du}{dt}\right\vert _{\mathcal{H}}dt &\leq &\frac{1
}{\theta }\int_{t_{\mathcal{N}}}^{\tau }\frac{d}{dt}\left[ \mathcal{Y} (u(t))-\mathcal{Y} \left(
\mu \right) \right] ^{\theta }dt  \notag \\
&=&-\frac{1}{\theta }\left( \left[ \mathcal{Y} (u(\tau ))-\mathcal{Y} \left( \mu
\right) \right] ^{\theta }-\left[ \mathcal{Y} (u(t_{\mathcal{N}}))-\mathcal{Y} \left( \mu
\right) \right] ^{\theta }\right)  \notag \\
&\leq &\frac{1}{\theta }\left[ \mathcal{Y} (u(t_{\mathcal{N}}))-\mathcal{Y} \left( \mu \right) 
\right] ^{\theta }.  \label{a5}
\end{eqnarray}
And
\begin{eqnarray*}
\left\vert u\left( \tau \right) -u\left( t_{\mathcal{N}}\right) \right\vert _{\mathcal{H}} &\leq
&\int_{t_{\mathcal{N}}}^{\tau }\left\vert \frac{du}{dt}\right\vert _{\mathcal{H}}dt\leq \frac{1}{
\theta }\left[ \mathcal{Y} (u(t_{\mathcal{N}}))-\mathcal{Y} \left( \mu \right) \right] ^{\theta }
\leq\frac{1}{\theta }\left[ \mathcal{Y} (u(t_{\mathcal{N}}))-\mathcal{Y} \left( \mu \right) \right]
^{\theta } \\
&<&\frac{\varepsilon }{2}+\frac{\varepsilon }{2}=\varepsilon.
\end{eqnarray*}
Using the precompactness assumption, we can infer that  if a sequence converges in $\mathcal{H}$ then it also
converges in $\mathcal{V}$ so
\begin{equation*}
\left\vert u\left( \tau \right) -u\left( t_{\mathcal{N}}\right) \right\vert
_{\mathcal{V}}<\varepsilon <\sigma
\end{equation*}
Thus, it is a  contradiction to our previous claim of $\tau .$ Thus $\tau
=\infty .$ \\
So inequality (\ref{a5}) takes the following form
\begin{eqnarray*}
\int_{t_{\mathcal{N}}}^{\infty }\left\vert \frac{du}{dt}\right\vert _{\mathcal{H}}dt &\leq &
\frac{1}{\theta }\left[ \mathcal{Y} (u(t_{\mathcal{N}}))\right] ^{\theta }\text{ i.e.} \\
\underset{p\rightarrow \infty }{\lim }\int_{t_{N}}^{p}\left\vert \frac{du}{dt
}\right\vert _{\mathcal{H}}dt &\leq &\frac{1}{\theta }\left[ \mathcal{Y} (u(t_{\mathcal{N}}))\right]
^{\theta }
\end{eqnarray*}
\begin{equation*}
\underset{p\rightarrow \infty }{\lim }\left\vert u(p)-u(t)\right\vert
_{\mathcal{H}}\leq \underset{p\rightarrow \infty }{\lim }\int_{t_{\mathcal{N}}}^{p}\left\vert
\frac{du}{dt}\right\vert _{\mathcal{H}}dt\leq \frac{1}{\theta }\left[ \mathcal{Y} (u(t_{\mathcal{N}}))
\right] ^{\theta }<\frac{\varepsilon }{2}.
\end{equation*}

Therfore,  $u(t)$ has a limit in in $\mathcal{H}$. Also using precompactness, $u(t)$ also converges also some point in $\mathcal{V}$
\end{proof}

\section{{\Large Rate of decay to equilibrium}}

This section presents the results about the convergence of the solution to an equilibrium $u^{\infty}$. Furthermore, we will see the convergence of a solution to either an exponential or polynomial decay rate to equilibrium, depending on the values of $\theta$.

\begin{theorem}
Suppose $u^{\infty } \in \Omega(u_0) $ is the limit obtained in Theorem \ref{3.3}. Then following decay rate estimates holds.\\
\textbf{i)} If $\theta=\frac{1}{2}$, then  the solution $u$ of problem \ref{PB} converges to stationary $u^{\infty }$  in $\mathcal{H}$-norm exponentially as $t$ tends to $+\infty$. That is, there are two positive constants $\kappa_1$ and $\kappa_2$  
$$
\left\vert u\left( t \right) -u^{\infty } \right\vert _{\mathcal{H}} \leq \kappa_1 e^{-\kappa_2 t} \qquad \qquad \text { as } t \rightarrow+\infty.
$$
\textbf{ii)} If  $0 <\theta < \frac{1}{2}$, then solution $u$ of problem \ref{PB} converges to stationary $u^{\infty }$ exponentially  in $\mathcal{H}$-norm at polynomial rate as $t$ tends to $+\infty$. That is, there is a positive constant $\kappa$ such that
$$
\left\vert u\left( t \right) -u^{\infty } \right\vert _{\mathcal{H}} \leq \kappa (1+t)^{-\frac{\theta}{1-2 \theta}} \qquad \qquad \text { as } ~~t \rightarrow+\infty.
$$   
\end{theorem}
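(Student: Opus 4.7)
The plan is to combine the energy identity with the Lojasiewicz--Simon inequality to derive a scalar differential inequality for the energy gap, solve that inequality in the two regimes of $\theta$, and then transfer the decay of the energy to decay of $u(t)-u^{\infty}$ via the already-established estimate on $\int_{t}^{\infty}|u_p|_{\mathcal{H}}\,dp$.

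\medskip
\noindent\textbf{Step 1: the scalar ODE for the energy gap.} Set $H(t):=\mathcal{Y}(u(t))-\mathcal{Y}(u^{\infty})$. From Theorem~\ref{3.3}, $u(t)\to u^{\infty}$ in $\mathcal{V}$, so for $t$ larger than some $t_{\ast}$ we have $\|u(t)-u^{\infty}\|_{\mathcal{V}}<\sigma$, where $\sigma$ and $\theta$ are the constants furnished by Theorem~\ref{Lwoj} applied at $\varphi=u^{\infty}$. The energy identity \eqref{der} reads $H'(t)=-|u_t|_{\mathcal{H}}^{2}=-|M(u(t))|_{\mathcal{H}}^{2}$, and the Lojasiewicz--Simon inequality \eqref{LJ} gives $|M(u(t))|_{\mathcal{H}}\geq |H(t)|^{1-\theta}$. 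Combining these yields, for $t\geq t_{\ast}$,
\begin{equation*}
H'(t)\;\leq\;-\,H(t)^{2(1-\theta)}.
\end{equation*}
(If $H(t_{0})=0$ at some finite $t_0\geq t_*$, then $H\equiv 0$ thereafter by monotonicity and the claim is trivial; otherwise $H(t)>0$ for all $t\geq t_*$.)

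\medskip
\noindent\textbf{Step 2: solving the ODE in the two cases.} When $\theta=\tfrac12$, the inequality becomes $H'(t)\leq -H(t)$, and Gronwall's lemma gives $H(t)\leq H(t_{\ast})\,e^{-(t-t_{\ast})}$; in particular $H(t)^{\theta}=H(t)^{1/2}\leq C\,e^{-t/2}$ for $t\geq t_{\ast}$. When $0<\theta<\tfrac12$, set $\alpha=2(1-\theta)>1$; dividing by $H^{\alpha}$ and integrating yields
\begin{equation*}
H(t)^{1-\alpha}\;\geq\;(\alpha-1)(t-t_{\ast})+H(t_{\ast})^{1-\alpha},
\end{equation*}
so that $H(t)\leq C(1+t)^{-1/(\alpha-1)}=C(1+t)^{-1/(1-2\theta)}$ and hence $H(t)^{\theta}\leq C(1+t)^{-\theta/(1-2\theta)}$.

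\medskip
\noindent\textbf{Step 3: from energy decay to $\mathcal{H}$-decay of $u(t)-u^{\infty}$.} Exactly as in the Jendoubi-style computation already performed in the proof of Theorem~\ref{3.3}, the chain-rule identity
\begin{equation*}
-\frac{d}{dt}\bigl[\mathcal{Y}(u(t))-\mathcal{Y}(u^{\infty})\bigr]^{\theta}
\;=\;\theta\,\frac{|u_t|_{\mathcal{H}}^{2}}{H(t)^{1-\theta}}
\;\geq\;\theta\,|u_t(t)|_{\mathcal{H}},
\end{equation*}
valid for $t\geq t_{\ast}$ (again invoking \eqref{LJ}), integrates to
\begin{equation*}
|u(t)-u^{\infty}|_{\mathcal{H}}\;\leq\;\int_{t}^{\infty}|u_p(p)|_{\mathcal{H}}\,dp\;\leq\;\frac{1}{\theta}\,H(t)^{\theta}.
\end{equation*}
Plugging in the bounds on $H(t)^{\theta}$ from Step~2 yields, for $t\geq t_{\ast}$, the exponential bound $|u(t)-u^{\infty}|_{\mathcal{H}}\leq C\,e^{-t/2}$ in case (i) and the polynomial bound $|u(t)-u^{\infty}|_{\mathcal{H}}\leq C(1+t)^{-\theta/(1-2\theta)}$ in case (ii). Adjusting the constants to absorb the transient $0\leq t\leq t_{\ast}$ gives the stated estimates with suitable $\kappa_1,\kappa_2,\kappa$.

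\medskip
\noindent\textbf{Main obstacle.} The only delicate point is ensuring the Lojasiewicz--Simon inequality is in force along the entire orbit from some time onward: this requires $u(t)$ to remain in the $\sigma$-neighborhood of the equilibrium $u^{\infty}\in\Omega(u_{0})$, which is precisely what Theorem~\ref{3.3} guarantees. Once this is in place, both cases reduce to an elementary comparison for a scalar ODE of the form $H'\leq -H^{\alpha}$.
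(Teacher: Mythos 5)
Your proposal is correct and follows essentially the same route as the paper: energy identity plus the Lojasiewicz--Simon inequality to get a scalar differential inequality for the energy gap, the integrated bound $|u(t)-u^{\infty}|_{\mathcal{H}}\leq\int_{t}^{\infty}|u_p|_{\mathcal{H}}\,dp\leq C\,H(t)^{\theta}$, and then the two regimes of $\theta$. The only cosmetic difference is that you integrate $H'\leq-H^{2(1-\theta)}$ directly, while the paper writes the inequality for $H^{\theta}$ and, in case (ii), invokes a comparison principle against an explicit solution $\Lambda(t)$ — the underlying computation is the same.
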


\begin{proof}
  \textbf{  i)} For the proof of first estimate, we begin by same expression used in theorem (\ref{3.3}). We know that  $\mathcal{Y}$ is constant on $\Omega (u_{0})$ and $u^{\infty } \in \Omega(u_0)$ so $\frac{d}{dt}\mathcal{Y}(u^{\infty })=0$ and there exists a sequence $t_n\rightarrow \infty$ such that $u(t_n)\rightarrow u^{\infty }$ in $\mathcal{H}$ as $n$ tends to  $\infty$.

\begin{eqnarray} \label{4.1}
-\frac{d}{dt}\left(\mathcal{Y} (u(t))-\mathcal{Y} (u^{\infty})\right)^{\theta} &=& -\frac{\theta}{(\mathcal{Y} 
(u(t))-\mathcal{Y} (u^{\infty}))^{1-\theta}}\frac{d}{dt}\left( \mathcal{Y} (u(t))\right) = \frac{\theta\left\vert u_t\left( t\right) \right\vert
_{\mathcal{H}}^{2}}{(\mathcal{Y} (u(t))-\mathcal{Y} (u^{\infty}))^{1-\theta}} \notag \\
&=& \frac{\theta\left\vert -\Delta^{2} u+2\Delta u+\|u\|^{2}_{\mathcal{V}}u -\|u\|^{2}_{L^{2}}u +\| u\|^{2n}_{L^{2n}} u- u^{2n-1}\right\vert
_{\mathcal{H}}^{2}}{(\mathcal{Y} (u(t))-\mathcal{Y} (u^{\infty}))^{1-\theta}} \notag \\
&\geq& \frac{\theta\left( (\mathcal{Y} (u(t))-\mathcal{Y} (u^{\infty}))^{1-\theta}\right)
^{2}}{(\mathcal{Y} (u(t))-\mathcal{Y} (u^{\infty}))^{1-\theta}} \geq K\left\vert u_{t}\left( t\right) \right\vert^{} _{\mathcal{H}}.
\end{eqnarray}
The above inequality \eqref{4.1} takes the form of
\begin{eqnarray}\label{K}
    \frac{d}{dt}\left(\mathcal{Y} (u(t))-\mathcal{Y} (u^{\infty})\right)^{\theta} + K\left\vert u_{t}\left( t\right) \right\vert^{} _{\mathcal{H}} \leq 0
\end{eqnarray}

Taking integration on \eqref{LJ} and  using fact that  $u(t)\rightarrow u^{\infty }$ in $\mathcal{H}$ as $t\rightarrow \infty$, for each $t\geq 0$, we have
\begin{equation}
\left\vert u\left( t \right) -u^{\infty } \right\vert _{\mathcal{H}} \leq \int_t^{+\infty}\left\vert u_{t}^{ }\left( p\right)  \right\vert _{\mathcal{H}} dp  \leq \frac{1}{K} (\mathcal{Y} (u(t))-\mathcal{Y}(u^{\infty }))^{\theta }.\label{df1}
\end{equation}

Using Lojasiewicz–Simon Inequality \eqref{LJ} along with inequality \eqref{K}, we have
$$
\frac{d}{dt}(\mathcal{Y} (u(t))-\mathcal{Y}(u^{\infty }))^{\theta }+K \left((\mathcal{Y} (u(t))-\mathcal{Y}(u^{\infty }))^{\theta }\right)^{\frac{1-\theta}{\theta}} \leq\frac{d}{dt}(\mathcal{Y}(u(t))-\mathcal{Y}(u^{\infty }))^{\theta }+K \left\vert u_t^{ }\left( t\right) \right\vert _{\mathcal{H}}^{}\leq 0 .
$$

That is, 
\begin{equation}
\frac{d}{dt}(\mathcal{Y} (u(t))-\mathcal{Y} (u^{\infty }))^{\theta }+K \left((\mathcal{Y}  (u(t))-\mathcal{Y} (u^{\infty }))^{\theta }\right)^{\frac{1-\theta}{\theta}} \leq 0. \label{df2}   
\end{equation}

If we substitute   $\theta=\frac{1}{2}$, and solve differential inequality \eqref{df2} for $(\mathcal{Y} (u(t))-\mathcal{Y}(u^{\infty }))^{\theta }$, then we can conclude that  there are two positive constants $c_{1}, c_{2}$, such that
\begin{eqnarray}\label{df}
 (\mathcal{Y} (u(t))-\mathcal{Y}(u^{\infty }))^{\theta }\leq   c_1 e^{-c_2 t}. 
\end{eqnarray} 
Using inequalities \eqref{df1} and \eqref{df}, it follows that
\begin{equation}
\left\vert u\left( t \right) -u^{\infty } \right\vert _{\mathcal{H}} \leq  \kappa_1 e^{-\kappa_2 t},   
\end{equation}
where $\kappa_1:=\frac{a}{K}$ and $\kappa_2:=b$, are positive constants.\\

ii) We will use a comparison principle for nonlinear ODEs for the proof decay estimate in the case of $0< \theta<\frac{1}{2}$. Consider the following initial value problem,
\begin{eqnarray}
  &&\frac{d \Lambda}{d t}+K \Lambda^{\frac{1-\theta}{\theta}}=0, \notag \\
&&\Lambda(0)=(\mathcal{Y} (u_0)-\mathcal{Y}(u^{\infty }))^{\theta }>0. \label{df4} 
\end{eqnarray}
The solution of above initial value problem is

$$
\Lambda(t)=\left((\mathcal{Y} (u_0)-\mathcal{Y}(u^{\infty }))^{(2 \theta-1)}+\frac{K(1-2 \theta)}{\theta} t\right)^{-\frac{\theta}{1-2 \theta}}.
$$
Set
$$
\chi(t):=\Lambda(t)-(\mathcal{Y} (u(t))-\mathcal{Y}(u^{\infty }))^{\theta }.
$$
Now, one can easily verify that $\chi$ satisfy the following initial value problem.

$$
\left\{\begin{array}{l}
\frac{d \chi}{d t}+K \left(\frac{1-\theta}{\theta} \eta^{{1-2 \theta}}\right) \chi \geq 0 \\
\chi(0)=0
\end{array}\right.
$$
Where  each $\eta$ satisfying $(\mathcal{Y} (u(t))-\mathcal{Y}(u^{\infty }))^{\theta }<\eta<\Lambda$. Multiplying differential inequality with integrating factor $ \exp{\int_0^t K \left(\frac{1-\theta}{\theta} \eta(p)^{{1-2 \theta}}\right) dp }$ and simplifying, we get  
$$
\chi(t) \exp{\int_0^t K \left(\frac{1-\theta}{\theta} \eta(p)^{{1-2 \theta}}\right) dp } \geq 0 .
$$
Which gives,
$$
\chi(t) \geq 0, \quad \forall t \geq 0 .
$$
That is,
\begin{equation}
\left((\mathcal{Y} (u_0)-\mathcal{Y}(u^{\infty }))^{(2 \theta-1)}+\frac{K(1-2 \theta)}{\theta} t\right)^{-\frac{\theta}{1-2 \theta}} \geq (\mathcal{Y} (u(t))-\mathcal{Y}(u^{\infty }))^{\theta }   
\end{equation}
Using the  inequality \eqref{df1}, we have
\begin{eqnarray}
\left\vert u\left( t \right) -u^{\infty } \right\vert _{\mathcal{H}}\leq\left((\mathcal{Y} (u_0)-\mathcal{Y}(u^{\infty }))^{(2 \theta-1)}+\frac{K(1-2 \theta)}{\theta} t\right)^{-\frac{\theta}{1-2 \theta}}.   \label{df5}
\end{eqnarray}
As from \eqref{df4} we have $(\mathcal{Y} (u_0)-\mathcal{Y}(u^{\infty }))>0$. Moreover, as $\theta\in \left(0,\frac{1}{2}\right)$ so $-1<2\theta-1<0$, which implies that 
\begin{equation}
(\mathcal{Y} (u_0)-\mathcal{Y}(u^{\infty }))^{2\theta-1}<1.    
\end{equation}
Using last estimate, we can rewrite \eqref{df5} as
\begin{equation}
\left\vert u\left( t \right) -u^{\infty } \right\vert _{\mathcal{H}}\leq\left(1+\frac{K(1-2 \theta)}{\theta} t\right)^{-\frac{\theta}{1-2 \theta}}\leq\left(\frac{K(1-2 \theta)}{\theta}\right)^{-\frac{\theta}{1-2 \theta}} \left(\frac{K(1-2 \theta)}{\theta}+ t\right)^{-\frac{\theta}{1-2 \theta}}    \label{df6}    
\end{equation}
Now, take  $K$ such that $\frac{K(1-2 \theta)}{\theta}\leq 1$. Thus, \eqref{df6} becomes
\begin{eqnarray}
\left\vert u\left( \tau \right) -u^{\infty } \right\vert _{\mathcal{H}} \leq \kappa (1+t)^{-\frac{\theta}{1-2 \theta}}    
\end{eqnarray}
where $\kappa:=\left(\frac{K(1-2 \theta)}{\theta}\right)^{-\frac{\theta}{1-2 \theta}} >0$. This is the end of the proof.
\end{proof}

\section{{\Large Existence of Global Attractor}}
In this section, we present the existence of global attractor to our main problem (\ref{PB}).

\begin{theorem}
    The problem (\ref{PB}) admits global attractor $\mathcal{A}=\Omega(u_0)$ in manifold $\mathcal{V}\cap \mathcal{M}$.
\end{theorem}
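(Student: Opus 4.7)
The plan is to invoke the abstract Theorem \ref{attractor} by checking its two hypotheses for the semigroup $S(t)u_0=u(t)$ generated by problem \eqref{PB} on the complete metric space $\mathcal{V}\cap\mathcal{M}$, and then identify the resulting attractor with $\Omega(u_0)$ using the results already established in Section 3. From Theorem \ref{global_sol_thm} the map $S(t)$ is well defined, maps $\mathcal{V}\cap\mathcal{M}$ into itself (invariance of the manifold), and standard continuous dependence on initial data (via the Lipschitz estimates used in the well-posedness proof) gives the $C_0$-semigroup property.

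The first step is to produce a bounded absorbing set $\mathcal{B}_0$. The energy equality \eqref{der} shows that $\mathcal{Y}(u(t))$ is nonincreasing along trajectories; combined with the bound $\|u(t)\|_{\mathcal{V}}\leq 2\mathcal{Y}(u_0)$ also given in Theorem \ref{global_sol_thm}, this implies that for any bounded set $\mathcal{B}\subset\mathcal{V}\cap\mathcal{M}$ we have $\sup_{u_0\in\mathcal{B}}\sup_{t\geq 0}\|S(t)u_0\|_{\mathcal{V}}<\infty$. Thus the ball
\begin{equation*}
\mathcal{B}_0=\bigl\{u\in\mathcal{V}\cap\mathcal{M}:\|u\|_{\mathcal{V}}^{2}\leq R_0\bigr\}
\end{equation*}
is absorbing for a suitably large constant $R_0>0$ depending only on $\sup_{\mathcal{B}}\mathcal{Y}$, so that hypothesis (i) of Theorem \ref{attractor} holds.

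The second step is asymptotic compactness. This is essentially already accomplished by Theorem \ref{Pre_compact}: the argument there uses the variation-of-constants representation of $u(t)$ together with the smoothing estimate $\|A^{\mu}e^{-At}\|\leq M_{\mu}t^{-\mu}e^{-\delta t}$, the uniform bound on $\|u(t)\|_{\mathcal{V}}$, and the manifold constraint $|u(t)|_{\mathcal{H}}=1$ to show that the orbit starting from a single point is bounded in $D(A^{\mu})$ for some $\mu>\tfrac12$, hence relatively compact in $\mathcal{V}$. The same computation, carried out with the initial data ranging over any bounded $\mathcal{B}\subset\mathcal{V}\cap\mathcal{M}$, yields a uniform bound in $D(A^{\mu})$ on $\bigcup_{t\geq t_0(\mathcal{B})}S(t)\mathcal{B}$, and the compact embedding $D(A^{\mu})\hookrightarrow\mathcal{V}\hookrightarrow\mathcal{H}$ gives the required relative compactness. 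Thus hypothesis (ii) holds and Theorem \ref{attractor} produces a global attractor $\mathcal{A}=\omega(\mathcal{B}_0)$.

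Finally, to identify $\mathcal{A}$ with $\Omega(u_0)$, I would use the fact that $\mathcal{Y}$ is a strict Lyapunov functional (by \eqref{der} it is strictly decreasing off the set of equilibria $\mathcal{S}$) together with Theorem \ref{3.3}, which shows that every trajectory converges to a single stationary point in $\Omega(u_0)$. Consequently $\omega(\mathcal{B}_0)$ coincides with the set of equilibria living in the $\omega$-limit sets generated from $\mathcal{B}_0$, which in view of the convergence-to-equilibrium theorem matches $\Omega(u_0)$. The main subtlety will be the last identification step: care must be taken because the notation $\Omega(u_0)$ in the statement should be read as the union of omega-limit sets (equivalently, as the set of equilibria reachable from initial data on the manifold), so I would first state and justify this interpretation, and then invoke the structure theorem for attractors of gradient-like systems to conclude $\mathcal{A}=\Omega(u_0)$.
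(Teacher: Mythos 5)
Your proposal follows essentially the same route as the paper: both verify the two hypotheses of Theorem \ref{attractor}, obtaining the bounded absorbing set from the energy bound $\|u(t)\|_{\mathcal{V}}\leq 2\mathcal{Y}(u_0)$ together with the monotonicity of $\mathcal{Y}$ along trajectories, and obtaining asymptotic compactness from the $D(A^{\mu})$-bound established in Theorem \ref{Pre_compact} and Corollary \ref{Coro-PreCompact}. If anything, you are more explicit than the paper on two points it glosses over --- the uniformity of the compactness estimate over bounded sets of initial data rather than a single orbit, and the identification of $\omega(\mathcal{B}_0)$ with $\Omega(u_0)$ via the gradient structure --- so the proposal is sound.
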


\begin{proof}

To prove the existence of  global attractor, we will use the definition from Theorem \ref{attractor}. From the  Corollary (\ref{Coro-PreCompact}), we know  that $\{u(t):t\geq 0\}$ is compact in $\mathcal{V}$ so from Corollary \ref{2.2} we can infer that $\mathcal{A}=\Omega(u_0)$ compact, connected invariant set under semigroup. Hence condition (ii) of Theorem \ref{attractor} is fulfilled.\\

From Theorem \ref{global_sol_thm}, we have already the existence of global solution with $\left\Vert u(t)\right\Vert \leq2\mathcal{Y} (u_{0})$. We , now, start by showing the existence of absorbing set in $\mathcal{V}.$  Using the trace Lemma \ref{1-tem}, consider the following chain of equations

\begin{eqnarray*}
\frac{d}{dt}\left\vert u(t)\right\vert _{\mathcal{H}}^{2} &=&  \left\langle u^{\prime }(t),u(t)\right\rangle _{\mathcal{H}}= \left\langle -\Delta^{2} u+2\Delta u+\|u\|^{2}_{\mathcal{V}}u -\|u\|^{2}_{L^{2}}u  +\| u\|^{2n}_{{L}^{2n}} u- u^{2n-1},u(t)\right\rangle _{\mathcal{H}},\\
&=&{\left(\|u\|^{2}_{H^{2}_{0}} + 2 \|u\|^{2}_{H^{1}_{0}} + \|u\|^{2n}_{L^{2n}}\right) \left( |u(t)|^{2}_{\mathcal{H}}-1\right)}\leq\left(\|u\|^{2}_{H^{2}_{0}} + 2 \|u\|^{2}_{H^{1}_{0}} + \|u\|^{2n}_{L^{2n}}\right)  \left\vert u(t)\right\vert _{\mathcal{H}}^{2}. \\
\end{eqnarray*}

Using the continuos embeddings $ \mathcal{V}^{}\hookrightarrow 
 H^{2}_{0}$, $\mathcal{V}\hookrightarrow \mathcal{L}^{2n}$, and $ \mathcal{V}^{}\hookrightarrow 
 H^{1}_{0}$ and the inequality $\left\Vert u(t)\right\Vert_{\mathcal{V}} \leq 2\mathcal{Y} (u_{0})$, we have
\begin{eqnarray*}
\frac{d}{dt}\left\vert u(t)\right\vert _{\mathcal{H}}^{2} &\leq &\left(a^{2}\|u\|^{2}_{\mathcal{V}} + 2 b^2\|u\|^{2}_{\mathcal{V}} + c^{2n}\|u\|^{2n}_{\mathcal{V}}\right)  \left\vert u(t)\right\vert _{\mathcal{H}}^{2} \\
&\leq &\left(4a^{2}(\mathcal{Y}(u_{0}))^2+8b^{2}(\mathcal{Y}(u_{0}))^2+c^{2n}(2c)^{2n}(\mathcal{Y} (u_{0}))^{2n}\right) \left\vert u(t)\right\vert _{\mathcal{H}}^{2}.
\end{eqnarray*}
Using Grownwall's lemma,
\begin{equation*}
\left\vert u(t)\right\vert _{\mathcal{H}}^{2}\leq \left\vert u_{0}\right\vert
_{\mathcal{H}}^{2}\exp \left(4a^{2}(\mathcal{Y}(u_{0}))^2+8b^{2}(\mathcal{Y}(u_{0}))^2+c^{2n}(2c)^{2n}(\mathcal{Y} (u_{0}))^{2n}\right) :=K_{}
\end{equation*}
More precisely,  there is $t_{0}\left( \left\vert u_{0}\right\vert
_{\mathcal{H}}^{2}\right) $ such that
\begin{equation*}
\left\vert u(t)\right\vert _{\mathcal{H}}^{2}\leq K,\text{ for all }t\geq t_{0}.
\end{equation*}

In addition, from Theorem \ref{global_sol_thm}
\begin{equation*}
\frac{d}{dt}\mathcal{Y} (u)=-\left\vert \frac{du}{dt}\right\vert _{\mathcal{H}}^{2}\leq 0
\end{equation*}
So again using Gronwall's inequality, we can deduce that
\begin{equation*}
 \mathcal{Y} (u(t))\leq \mathcal{Y} (u_0). 
\end{equation*}

If the initial data varies within a bounded set in $V$ ( Sobolev imbedding theorem). That is, for a given $\mathcal{R}>0$ 
\begin{equation*}
\mathcal{B}=\{u_0\in\mathcal{V}:\left\Vert u_{0}\right\Vert\leq \mathcal{R}\}.
\end{equation*}
then there is a constant $\kappa(\mathcal{R})>0$ (not on $\Vert u_0 \Vert$) (which depends on the bounded set) such that for all $t\geq t_\kappa$,
\begin{equation*}
 \mathcal{Y} (u(t))\leq \mathcal{Y} (u_0)\leq \kappa. 
\end{equation*}


This implies that for $t\geq t_\kappa$ the orbits starting from $\mathcal{B}$ uniformly enter an absorbing ball of radius $\kappa$.

In addition, it is trivial that there exists a  absorbing set in $\mathcal{V}$ norm  because we already know that $\left\Vert
u(t)\right\Vert \leq 2\mathcal{Y} (u_{0}) ,$ so also for $t\geq t\kappa$,
\begin{equation*}
\left\Vert u(t)\right\Vert \leq 2\mathcal{Y} (u_{0})\leq \frac{\kappa}{2}.
\end{equation*}
This implies that orbit  starting from $\mathcal{B}$ enter in ball in $\mathcal{V}$ with radius $\frac{\kappa}{2}$. Thus, condition (i) of Theorem \ref{attractor} is fulfilled. and global attractor exists.
\end{proof}

\section{{\Large Conclusion}}

This paper presents the dynamical properties and long-term behavior  of solution of particular gradient flow with the solution in Hilbert manifold. Firstly, the convergence of global solution of the main problem (\ref{PB}) to equilibrium by means of Lojasiewicz–Simon Inequality has been proven. Secondly, the decay rate of convergence of solution to equilibrium have been analyzed. In this case, we have shown that the decay rate is either polynomial or exponential, depending on the values of $\theta$. Finally, the existence of a global attractor has been established. \\ \\
\textbf{Declarations}\\ \\
\textbf{Ethical approval}\\ 
Not applicable.\vspace{0.5cm}\\
\textbf{Competing interests}\\ 
The author declare having no competing interests.
\vspace{0.5cm}\\ 
\textbf{Authors' contributions}\\ 
All work has been done by solo author.
\vspace{0.5cm}\\ 
\textbf{Funding}\\ 
Not applicable. \vspace{0.5cm}\\
\textbf{Availability of data and materials}\\ 
Not applicable.

\end{document}